\newcommand{\E}{\mathbf{E}}
\renewcommand{\P}{\mathbf{P}}
\renewcommand {\epsilon}{\varepsilon}
\theoremstyle{plain}
\newtheorem{thm}{Theorem}[section]
\newtheorem{lem}[thm]{Lemma}
\theoremstyle{definition}
\newtheorem{rem}[thm]{Remark}
\DeclareMathSymbol{\ophi}{\mathalpha}{letters}{"1E}
\newcommand{\e}{\varepsilon}
\renewcommand{\phi}{\varphi}
\newcommand{\be}{\begin{equation}}
\newcommand{\ee}{\end{equation}}
\newcommand{\ben}{\begin{equation*}}
\newcommand{\een}{\end{equation*}}
\newcommand{\ba}{\begin{equation}\begin{aligned}}
\newcommand{\ea}{\end{aligned}\end{equation}}
\newcommand{\ban}{\begin{equation*}\begin{aligned}}
\newcommand{\ean}{\end{aligned}\end{equation*}}
\renewcommand{\i}{\mathrm{i}}
\newcommand{\ex}{\mathrm{e}}
\newcommand{\di}{\mathrm{d}}
\newcommand{\rB}{\mathscr{B}}
\newcommand{\rF}{\mathscr{F}}
\newcommand{\bR}{\mathbb{R}}
\let\oldmarginpar\marginpar
\renewcommand{\marginpar}[1]{\oldmarginpar{\scriptsize\texttt{\color{red}{#1}}}}
\begin{document}
\title{Strong uniform Wong--Zakai approximations\\ of L\'evy-driven Marcus SDEs}



\author{Ilya Pavlyukevich\thanks{Institute of Mathematics, Friedrich Schiller University Jena, Ernst--Abbe--Platz 2, 
07743 Jena, Germany; \texttt{ilya.pavlyukevich@uni-jena.de}}\,\,  
and 
Sooppawat Thipyarat\thanks{Institute for Innovative Learning, Mahidol University, Nakhon Pathom, Thailand (\texttt{sooppawat.thi@mahidol.ac.th})}
}

\maketitle

\begin{abstract}
For a solution $X$ of a L\'evy-driven $d$-dimensional Marcus (canonical) stochastic differential equation,
we show that the Wong--Zakai type approximation scheme $X^h$
has a strong convergence of order $\frac12$: for each $T\in [0,\infty)$ and all $x\in\bR^d$ we have
$$
\E \sup_{kh\leq T}|X_{kh}(x)-X^h_{kh}(x)|\leq  C h^{\frac{1}{2}}(1+|x|),\quad h\to 0.
$$
We also determine the rate of the locally uniform strong convergence: for each $N\in(0,\infty)$ we have
$$
\E\sup_{|x|\leq N}\sup_{kh\leq T}|X_{kh}(x)-X^h_{kh}(x)|\leq C h^{\frac{1-\e}{4d}},\quad h\to 0.
$$
\end{abstract}

\noindent
\textbf{Keywords:} L\'evy process; Marcus (canonical) stochastic differential equation; 
Wong--Zakai approximation; strong approximation theorem;   convergence rate.

\smallskip

\noindent
\textbf{2010 Mathematics Subject Classification:}
65C30$^*$ Stochastic differential and integral equations;
60H10 Stochastic ordinary differential equations; 
60G51 Processes with independent increments; L\'evy processes;
60H35 Computational methods for stochastic equations

\numberwithin{equation}{section}

\section{Introduction}

In 1965, Wong and Zakai \cite{WongZakai-65,WongZakai-65b} posed the following question.
Let $W=(W_t)_{t\geq 0}$ be a standard (one-dimensional) Brownian motion, and let $W^h=(W^h_t)_{t\geq 0}$ be a sequence
of polygonal approximations of $W$, i.e., a sequence of random continuous functions of bounded variation
with piece-wise continuous derivatives that are given by the formula
\begin{equation}
\label{e:polW}
 W^h_t=W_{kh}+\frac{t-kh}{h}\Big(W_{(k+1)h}-W_{kh}\Big),\quad t\in[kh,(k+1)h),\quad k\in\mathbb N_0,\\
\end{equation}
for $h\in(0,1]$. It is clear, that the approximations $W^h$ converge to $W$ uniformly on each compact interval
$[0,T]$ with probability 1 as $h\to 0$.
Let $a$ and $b$ be Lipschitz continuous functions.
Then, for each $h\in(0,1]$, the random ordinary differential equation (ODE)
$\dot X^h=a(X^h) + b(X^h)\dot W^h$ has a unique
solution on $[0,T]$ with probability 1. How can one describe the limit of $X^h$ as $h\to 0$?

It turns out that the limit process $X^\circ$ exists as a uniform a.s.\ limit of $X^h$ and it satisfies the \emph{Stratonovich}
SDE $\di X^\circ=a(X^\circ)\,\di t+ b(X^\circ)\circ \di W$.
This type of approximations, called \emph{the Wong--Zakai approximations},
was studied by many authors, see, e.g., the review paper by Trawdowska
\cite{Twardowska-96}.
The classical treatment of convergence can be found in Chapter VI Section 7 of Ikeda and Watanabe \cite{IW89}.
The following results hold true.
\begin{thm}[\cite{IW89}, Chapter VI, Theorem 7.2, Remark 7.2 and Theorem 7.3]
\label{t:11}
Let $a\in C^2_b(\bR^d,\bR^d)$ and $b\in C^2_b(\bR^d,\bR^{d\times m})$. Then for every $T\in[0,\infty)$ we have
\begin{equation}
\label{e:stra}
\lim_{h\to 0}\sup_{x\in\bR^d}\E \sup_{t\in[0,T]}|X^{\circ}_t(x) - X^{h}_t(x) |^2=0.
\end{equation}
Moreover, for every $T\in[0,\infty)$ and $N\in(0,\infty)$ and $p\in[2,\infty)$ we have
\begin{equation}
\lim_{h\to 0}\E \sup_{|x|\leq N}\sup_{t\in[0,T]}|X^{\circ}_t(x) - X^{h}_t(x) |^p=0.
\end{equation}
\end{thm}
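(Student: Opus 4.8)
\textbf{Plan of proof of Theorem~\ref{t:11}.} The strategy is to pass to the Itô formulation of the Stratonovich equation and to recognise the polygonal scheme $X^h$ as an Euler--Maruyama scheme for that Itô equation, perturbed by a martingale-difference array of $L^2$-size $O(h)$ per step plus a remainder of $L^p$-size $O(h^{3/2})$ per step. Write $W=(W^1,\dots,W^m)$ and let $b_\alpha$ be the $\alpha$-th column of $b$. Since $b\in C^2_b$, the corrected drift $\bar a:=a+\tfrac12\sum_{\alpha=1}^m(Db_\alpha)b_\alpha$, with $(Db_\alpha)_{ij}=\partial_j b_\alpha^i$, is bounded and globally Lipschitz, so $\di X^\circ=\bar a(X^\circ)\,\di t+b(X^\circ)\,\di W$ has a unique strong solution, which coincides with the Stratonovich solution and satisfies $\E\sup_{t\le T}|X^\circ_t(x)|^p\le C_p(1+|x|^p)$ uniformly in $x$.

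On $[kh,(k+1)h]$ the flow $X^h$ solves $\dot X^h_s=a(X^h_s)+b(X^h_s)\Delta_k W/h$ with $\Delta_k W:=W_{(k+1)h}-W_{kh}$. Substituting the equation into itself once and Taylor-expanding $a,b$ to second order (legitimate since $a,b\in C^2_b$) gives
\[
X^h_{(k+1)h}=X^h_{kh}+a(X^h_{kh})h+b(X^h_{kh})\Delta_k W+\tfrac12\!\!\sum_{\alpha,\beta=1}^m\!(Db_\alpha)(X^h_{kh})\,b_\beta(X^h_{kh})\,(\Delta_k W)_\alpha(\Delta_k W)_\beta+\rho_k ,
\]
where $\E[|\rho_k|^p\mid\cF_{kh}]\le Ch^{3p/2}$, with $C$ depending only on $T,p$ and the $C^2_b$-norms of $a,b$. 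Replacing $(\Delta_k W)_\alpha(\Delta_k W)_\beta$ by its conditional mean $\delta_{\alpha\beta}h$ converts the quadratic term into $\tfrac12\sum_\alpha(Db_\alpha)b_\alpha\,h=(\bar a-a)(X^h_{kh})h$ plus a martingale increment $\mu_k$ with $\E[\mu_k\mid\cF_{kh}]=0$ and $\E[|\mu_k|^2\mid\cF_{kh}]\le Ch^2$. Thus on the grid $X^h$ is the Euler--Maruyama scheme for the Itô equation above, perturbed by $\mu_k$ and $\rho_k$; crucially, the $L^p$-bounds on $\mu_k,\rho_k$ involve only the $C^2_b$-norms of $a,b$, not $|X^h_{kh}|$.

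The first assertion then follows routinely. Writing the difference equation for $X^\circ(x)-X^h(x)$ — whose coefficients are increments of the globally Lipschitz maps $\bar a,b$ and whose inhomogeneity $\sum(\mu_k+\rho_k)+\dots$ has $L^2$-size $O(h^{1/2})$ controlled purely by $T$ and the $C^2_b$-norms — one bounds the stochastic-integral and $\sum\mu_k$ contributions by Burkholder--Davis--Gundy, bounds $\sum_{kh\le T}\|\rho_k\|_{L^p}\le C(T/h)h^{3/2}=O(h^{1/2})$ directly, and closes with discrete Gronwall to obtain $\E\sup_{kh\le T}|X^\circ_{kh}(x)-X^h_{kh}(x)|^2\le C(T)h$ with $C(T)$ independent of $x$. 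Interpolating to all $t\in[0,T]$ via the one-step oscillation bound $\E\sup_{s\in[kh,(k+1)h]}|X^h_s-X^h_{kh}|^p\le C(\,|\Delta_k W|^p+h^p)$ and the modulus of continuity of $X^\circ$ then gives $\sup_x\E\sup_{t\le T}|X^\circ_t(x)-X^h_t(x)|^2\to0$.

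For the locally uniform assertion I would cover $\{|x|\le N\}$ by a fixed finite $\eta$-net $\{x_i\}_{i\le M}$ and split, for each $\eta>0$,
\[
\sup_{|x|\le N}\sup_{t\le T}\big|(X^\circ-X^h)_t(x)\big|\le\max_{i\le M}\sup_{t\le T}\big|(X^\circ-X^h)_t(x_i)\big|+\sup_{|x-y|\le\eta}\sup_{t\le T}\big|(X^\circ-X^h)_t(x)-(X^\circ-X^h)_t(y)\big| .
\]
The first term has $L^1$-norm at most $M^{1/p}\max_i\big(\E\sup_{t\le T}|(X^\circ-X^h)_t(x_i)|^p\big)^{1/p}$, which tends to $0$ as $h\to0$ for each fixed $\eta$ by the pointwise estimate. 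The second term is controlled by the spatial moduli of continuity of the two flows, and the crucial ingredient is that both $X^\circ$ and, \emph{uniformly in $h$}, $X^h$ satisfy $\E\sup_{t\le T}|X_t(x)-X_t(y)|^p\le C_p|x-y|^p$; granting this, Garsia--Rodemich--Rumsey (or Kolmogorov's criterion applied to the difference random field) bounds the second term by $C_N\eta^{\gamma}$ for some $\gamma\in(0,1)$, uniformly in $h$, and letting first $\eta\to0$ control it and then $h\to0$ yields $\E\sup_{|x|\le N}\sup_{t\le T}|X^\circ_t(x)-X^h_t(x)|^p\to0$ for every $p\in[2,\infty)$. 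The main obstacle is precisely this uniform-in-$h$ spatial H\"older estimate for the polygonal flow: the Jacobian $J^h=\partial_x X^h$ obeys a linear ODE whose logarithmic increment over one mesh interval is, to leading order, the martingale difference $\sum_\alpha(Db_\alpha)(X^h_{kh})(\Delta_k W)_\alpha$, whose exponential moments are bounded uniformly in $h$ through the Gaussian Laplace transform $\E[\exp(c(Db_\alpha)(X^h_{kh})(\Delta_k W)_\alpha)\mid\cF_{kh}]=\exp(\tfrac{c^2}{2}\|(Db_\alpha)(X^h_{kh})\|^2h)$ together with $\sum_{kh\le T}\|(Db_\alpha)(X^h_{kh})\|^2h\le\|Db\|_\infty^2T$; one must therefore avoid the crude bound $\|J^h_T\|\le\exp(\int_0^T\|A_s\|\,\di s)$, which brings in $\sum_{kh\le T}|\Delta_k W|$ and blows up like $\exp(c/\sqrt h)$, while the residual second-order terms contribute only $\sum_{kh\le T}|\Delta_k W|^2$, whose exponential moments are again uniformly bounded. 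The attendant $x$-uniform bookkeeping of the remainder $\rho_k$ in the one-step expansion (an iterated ODE integral over a mesh interval, hence a priori laden with high powers of $\Delta_k W$) is the other point requiring care.
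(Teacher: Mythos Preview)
The paper does not prove Theorem~\ref{t:11}; it is quoted from Ikeda--Watanabe \cite{IW89} as background. What the paper proves are Theorems~\ref{thm:Xx-Xhx} and~\ref{t:main}, which extend and sharpen Theorem~\ref{t:11} to the L\'evy--Marcus setting with explicit rates, so the meaningful comparison is between your sketch and those proofs (specialised to $c\equiv 0$).

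Your plan for the pointwise estimate is sound but organised differently. You work on the grid, expand the one-step map to second order, and view $X^h$ as a perturbed Euler--Maruyama scheme. The paper instead introduces the continuous-time c\`adl\`ag interpolation $\bar X^h_t=\Psi(X^h_{n_t^hh};t^h,\Delta W^h_t,\Delta Z^h_t)$, derives a genuine It\^o SDE for $\bar X^h$ by applying It\^o's formula to $\Psi$, and then compares the two It\^o SDEs term by term via a three-way split (through $\bar X^h_s$ and $X^h_{n_s^hh}$); Gronwall is applied in continuous time. This packaging hides your martingale-difference array $\mu_k$ inside the It\^o integral of $\Psi_w$ and absorbs your remainders $\rho_k$ into the bounds on $\Psi_{\tau\tau},\Psi_{\tau w},\dots$ (the paper's assumption $\mathbf{H}_{\Psi,K}$). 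The two routes are equivalent in content; the continuous-time route avoids the discrete bookkeeping you flag at the end.

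For the locally uniform assertion your $\eta$-net plus Garsia--Rodemich--Rumsey argument is correct. The paper takes the closely related but slightly slicker path of interpolating the two inequalities $\|U^h(x)-U^h(y)\|_p\le C h^{1/p}$ and $\|U^h(x)-U^h(y)\|_p\le C|x-y|$ to obtain $\|U^h(x)-U^h(y)\|_p\le C h^{q/p}|x-y|^{1-q}$, and then applies a Kolmogorov-type continuity theorem for random fields (Kunita \cite{Kunita-04}); this yields an explicit rate $h^{(1-\varepsilon)/4d}$ rather than mere $o(1)$. The ``main obstacle'' you single out---the uniform-in-$h$ bound $\E\sup_t|X^h_t(x)-X^h_t(y)|^p\le C|x-y|^p$---is exactly the paper's Theorem~\ref{t:existenceWZ}, proved not via your discrete Jacobian/martingale argument but again through the It\^o SDE for $\bar X^h$ and the Lipschitz part of $\mathbf{H}_{\Psi,K}$. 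Your martingale approach to the Jacobian is correct and is morally the same computation, just done step-by-step rather than via stochastic calculus.

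One caveat: your claim $\E[|\rho_k|^p\mid\cF_{kh}]\le Ch^{3p/2}$ with a third-order Taylor remainder tacitly needs control of third derivatives of the one-step flow $\Psi$. The paper is explicit about this and works under $C^3$-type hypotheses ($\mathbf{H}^{(3)}_a,\mathbf{H}^{(3)}_b$); Ikeda--Watanabe manage with $C^2_b$ because they only claim $o(1)$, not a rate. If you want the $O(h)$ rate you derive en route, you should either assume one more derivative or argue more carefully that the second-order remainder in the iterated-ODE expansion already has $L^p$-size $O(h^{3/2})$ under $C^2_b$.
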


The speed of convergence in \eqref{e:stra} depends on the
smoothness properties of the coefficients $a$ and $b$. For instance,
Gy{\"o}ngy and Michaletzky \cite{GyongyM-04} proved the following theorem.
\begin{thm}[\cite{GyongyM-04}, Remark 2.6 and Theorem 2.8]
Let $a$ be bounded and Lipschitz-continuous, $b$ be bounded and have a bounded and Lipschitz-continuous derivative, and
let $X_0=\xi$ be a random variable independent of $W$.
Then for every $T>0$ there is $C>0$ such that for $h>0$ small enough
$$
\sup_{t\in[0,T]}\E |X^{\circ}_t - X^{h}_t |^2\leq C h^\frac{1}{2}
$$
\end{thm}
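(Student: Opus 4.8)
\medskip
\noindent\textbf{Proof proposal.} Qualitative uniform convergence $X^{h}\to X^{\circ}$ being already available (Theorem~\ref{t:11}), only the \emph{rate} is at stake, and the natural route is to compare $X^{h}$ against a one-step It\^o--Taylor expansion of the It\^o form of $X^{\circ}$ and to close a discrete Gronwall estimate. First I would rewrite the Stratonovich equation as $\di X^{\circ}_{t}=\bar a(X^{\circ}_{t})\,\di t+b(X^{\circ}_{t})\,\di W_{t}$ with $\bar a=a+\tfrac12\,c$, where $c$ is the Stratonovich-to-It\^o drift correction, built from $b$ and $b'$. Since $b$ is bounded and $b'$ is bounded and Lipschitz, $c$ is bounded and Lipschitz, hence so is $\bar a$; consequently $X^{\circ}_{t}-\xi$ and $X^{h}_{t}-\xi$ have moments of every order bounded on $[0,T]$, uniformly in $h$ and in $\xi$ (this is where boundedness of the coefficients enters, to render the final constant independent of $\xi$). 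On each interval $[kh,(k+1)h)$ the approximation solves $\dot X^{h}_{t}=a(X^{h}_{t})+b(X^{h}_{t})\,\dot W^{h}_{t}$ with the piecewise-constant velocity $\dot W^{h}_{t}=h^{-1}\Delta_{k}$, where $\Delta_{k}:=W_{(k+1)h}-W_{kh}$; as $a$ and $b$ are bounded, $|\dot X^{h}|\le\|a\|_{\infty}+\|b\|_{\infty}h^{-1}|\Delta_{k}|$ there, so $\sup_{t\in[kh,(k+1)h]}|X^{h}_{t}-X^{h}_{kh}|\le\|a\|_{\infty}h+\|b\|_{\infty}|\Delta_{k}|$.

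The technical core is a one-step expansion. Iterating the integral form of the ODE and expanding to second order in the increment, and reorganising the result with the help of $\bar a=a+\tfrac12 c$, one obtains
\[
X^{h}_{(k+1)h}=X^{h}_{kh}+\bar a(X^{h}_{kh})\,h+b(X^{h}_{kh})\,\Delta_{k}+\tfrac12\,c(X^{h}_{kh})\,(\Delta_{k}^{2}-h)+r^{h}_{k},
\]
where $r^{h}_{k}$ collects the higher Picard iterates, the first-order Taylor remainder of $b$, and the mean-zero $O(h^{3/2})$ terms involving $\Delta_{k}h$; using $a$ Lipschitz and $b\in C^{1,1}_{b}$ one verifies $\E[\,|r^{h}_{k}|\mid\cF_{kh}]\le Ch^{3/2}$ and $\E[\,|r^{h}_{k}|^{2}\mid\cF_{kh}]\le Ch^{3}$, the constant depending only on the bounds and Lipschitz constants of $a$, $b$, $b'$ (not on the state, since these are bounded). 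The parallel computation for $X^{\circ}$ is an It\^o--Taylor expansion to second order, using $\int_{kh}^{(k+1)h}(W_{s}-W_{kh})\,\di W_{s}=\tfrac12(\Delta_{k}^{2}-h)$; it yields the same formula with $X^{h}_{kh}$ replaced by $X^{\circ}_{kh}$ and a remainder $r^{\circ}_{k}$ obeying the same bounds. The decisive structural point is that the linear \emph{and} the quadratic terms of the two expansions coincide. (For $m$-dimensional noise the only extra feature is that the polygonal scheme reproduces solely the \emph{symmetric} part of the mixed iterated integrals, whereas the Stratonovich limit keeps the full L\'evy areas; their difference is an $\cF_{kh}$-martingale increment of conditional $L^{2}$-size $O(h)$, harmless below.)

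Set $\delta_{k}:=X^{\circ}_{kh}-X^{h}_{kh}$, so $\delta_{0}=0$, and subtract the two expansions. Squaring and taking $\E[\,\cdot\mid\cF_{kh}]$: independence of $\Delta_{k}$ from $\cF_{kh}$ annihilates the cross terms between $\delta_{k}$ and the martingale increments, Lipschitz continuity of $\bar a$, $b$ and $c$ turns the surviving products into multiples of $|\delta_{k}|^{2}$, and the remainders enter through $\E[|r^{\circ}_{k}-r^{h}_{k}|\mid\cF_{kh}]\le Ch^{3/2}$ and $\E[|r^{\circ}_{k}-r^{h}_{k}|^{2}\mid\cF_{kh}]\le Ch^{3}$. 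This produces
\[
\E\big[\,|\delta_{k+1}|^{2}\mid\cF_{kh}\big]\le(1+Ch)\,|\delta_{k}|^{2}+C\,|\delta_{k}|\,h^{3/2}+Ch^{2}.
\]
Using $C|\delta_{k}|h^{3/2}\le h|\delta_{k}|^{2}+Ch^{2}$, taking expectations, summing over the $O(h^{-1})$ blocks and invoking the discrete Gronwall lemma (the martingale pieces collected along the way are handled by the Burkholder--Davis--Gundy inequality) gives $\max_{kh\le T}\E|\delta_{k}|^{2}\le Ch$, a fortiori $\le Ch^{1/2}$. Passing from the grid to all $t\in[0,T]$ costs only $O(h)$, since $\E\sup_{t\in[kh,(k+1)h]}|X^{\circ}_{t}-X^{\circ}_{kh}|^{2}\le Ch$ by Doob's inequality and $\E\sup_{t\in[kh,(k+1)h]}|X^{h}_{t}-X^{h}_{kh}|^{2}\le Ch$ by the interval bound above; hence $\sup_{t\in[0,T]}\E|X^{\circ}_{t}-X^{h}_{t}|^{2}\le Ch^{1/2}$.

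The main obstacle will be the one-step expansion of the random ODE: one must produce remainders of order $h^{3/2}$ in conditional $L^{1}$ and $h^{3}$ in conditional $L^{2}$, \emph{uniformly in $h$}, even though the driving velocity $\dot W^{h}$ is of order $h^{-1/2}$ on each block, so that the nonlinear flow over $[kh,(k+1)h]$ is captured, to that precision, by its linearisation plus the single quadratic term $\tfrac12 c(X^{h}_{kh})\Delta_{k}^{2}$. The minimal regularity ($a$ merely Lipschitz, $b$ only $C^{1,1}$) must be handled carefully, as --- unlike in the smooth case --- the non-mean-zero parts of the remainders carry no usable cancellation; these accumulated per-block errors, together with the quadratic-variation fluctuation $\sum_{k}(\Delta_{k}^{2}-h)$ and, in dimension $>1$, the L\'evy-area mismatch, are exactly what has to be controlled to reach the stated rate.
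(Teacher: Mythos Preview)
The paper does not prove this theorem. It is quoted verbatim from Gy\"ongy and Michaletzky \cite{GyongyM-04} (Remark~2.6 and Theorem~2.8 there) as background in the introduction, alongside Theorems~\ref{t:11}--\ref{t:14}, to situate the paper's own results on L\'evy-driven Marcus SDEs. There is no proof in the present paper against which your proposal can be compared.

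Your sketch is a standard and essentially correct route to the estimate; in fact, as you observe yourself, the one-step comparison with conditional $L^2$-remainder of order $h^3$ closes via discrete Gronwall to $\max_{kh\le T}\E|\delta_k|^2\le Ch$, which is strictly stronger than the stated $h^{1/2}$ bound. The points you single out as delicate --- extracting the second-order term with only $C^{1,1}$ regularity on $b$, and in the multidimensional case the L\'evy-area mismatch between the polygonal scheme and the Stratonovich solution --- are indeed the right ones.

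If you want a point of comparison within the paper, look at the proof of Theorem~\ref{thm:Xx-Xhx}, which establishes the analogous (and more general) L\'evy-driven estimate $\E\sup_{t\le T}|X_t-\bar X^h_t|^p\le Ch(1+|x|^p)$. The method there is genuinely different from yours: rather than a block-by-block one-step expansion, the authors represent the continuous extension $\bar X^h$ itself as the solution of an It\^o SDE via the It\^o formula applied to $\Psi$, see \eqref{e:SDEbarX}, subtract it from the It\^o form \eqref{e:SDEI} of $X$, and decompose every integrand as $F=F_1+F_2+F_3$ (the Lipschitz difference in $X-\bar X^h$; the within-block drift $\bar X^h_s-\bar X^h_{n_s^hh}$; the mismatch between the coefficient evaluated at the grid point and the corresponding partial derivative of $\Psi$). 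A continuous Gronwall argument then closes the estimate. That approach delivers the supremum \emph{inside} the expectation directly and scales cleanly to the jump integrals and to the hypothesis structure \textbf{H$_{\Psi,K}$}; your discrete approach is lighter in the purely Brownian setting and makes the role of the local truncation error more transparent.
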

Brze\'zniak and Flandoli \cite{BrzFla-95} obtained the following result.
\begin{thm}[Theorem 2.4 in \cite{BrzFla-95}]
Let $a\in C^1_b(\bR^d,\bR^{d})$ and $b\in C^2_b(\bR^d,\bR^{d\times m})$.
Then for every $T\in [0,\infty)$ and $p\in[1,\infty)$ there is $C>0$ such that for $x\in\bR^d$ and $h\in(0,1]$
\begin{displaymath}
\sup_{t\in[0,T]}\E |X^{\circ}_t - X^{h}_t |^{2p}\leq C h^{p-1}.
\end{displaymath}
\end{thm}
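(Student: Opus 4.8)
The plan is to reduce the continuous comparison to a recursion on the grid $t_k=kh$, exploiting that on each cell $[kh,(k+1)h)$ the driver $W^h$ is the straight segment from $W_{kh}$ to $W_{(k+1)h}$, so $\dot W^h\equiv\Delta_kW/h$ with $\Delta_kW:=W_{(k+1)h}-W_{kh}$. First I would pass to the It\^o form $\di X^\circ=\tilde a(X^\circ)\,\di t+b(X^\circ)\,\di W$ with corrected drift $\tilde a=a+c$, $c^i=\tfrac12\sum_{j,l}b^{lj}\partial_lb^{ij}$; under $a\in C^1_b$, $b\in C^2_b$ both $\tilde a$ and $c$ are bounded and globally Lipschitz. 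A pathwise a priori bound removes the apparent blow-up from $\dot W^h=O(h^{-1/2})$: the cell ODE $\dot y=a(y)+b(y)\Delta_kW/h$ with $y_{kh}=X^h_{kh}$ satisfies $\sup_{kh\le t\le(k+1)h}|y_t-X^h_{kh}|\le\|a\|_\infty h+\|b\|_\infty|\Delta_kW|$ deterministically. Using this control I would Taylor-expand the flow to second order and obtain
\be
X^h_{(k+1)h}-X^h_{kh}=a(X^h_{kh})h+b(X^h_{kh})\Delta_kW+\tfrac12\sum_{j,l}G_{jl}(X^h_{kh})\Delta_kW^j\Delta_kW^l+\rho_k,
\ee
where $G_{jl}=(Db^{\cdot l})\,b^{\cdot j}$ and $\E[|\rho_k|^{2p}\mid\cF_{kh}]\le Ch^{3p}$, the exponent $3p$ relying precisely on boundedness of $D^2b$. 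Since $\E[\tfrac12\sum_{j,l}G_{jl}\Delta_kW^j\Delta_kW^l\mid\cF_{kh}]=\tfrac12\sum_jG_{jj}(X^h_{kh})h=c(X^h_{kh})h$, the deterministic part of the increment is exactly $\tilde a\,h$.

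Next I would write the matching It\^o--Taylor increment $X^\circ_{(k+1)h}-X^\circ_{kh}=\tilde a(X^\circ_{kh})h+b(X^\circ_{kh})\Delta_kW+\sum_{j,l}G_{jl}(X^\circ_{kh})I^{jl}_k+\tilde\rho_k$, with double integrals $I^{jl}_k:=\int_{kh}^{(k+1)h}(W^j_s-W^j_{kh})\,\di W^l_s$ and $\E[|\tilde\rho_k|^{2p}\mid\cF_{kh}]\le Ch^{3p}$, and subtract. Because the polygonal driver is a straight segment, its iterated integral is the symmetric product $\tfrac12\Delta_kW^j\Delta_kW^l$, and It\^o's product rule $\Delta_kW^j\Delta_kW^l=I^{jl}_k+I^{lj}_k+\delta_{jl}h$ shows that the Wong--Zakai quadratic term reproduces exactly the symmetric part of the iterated term together with the correction $c\,h$, but misses the antisymmetric L\'evy area $A^{jl}_k:=\tfrac12(I^{jl}_k-I^{lj}_k)$. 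Setting $e_k:=X^h_{kh}-X^\circ_{kh}$ I obtain $e_{k+1}=e_k+[\tilde a(X^h_{kh})-\tilde a(X^\circ_{kh})]h+[b(X^h_{kh})-b(X^\circ_{kh})]\Delta_kW+D_k+(\rho_k-\tilde\rho_k)$, where $D_k$ gathers the L\'evy-area discrepancy $\sum_{j,l}G_{jl}(X^\circ_{kh})A^{jl}_k$ and the $O(|e_k|)$ mismatch of the quadratic coefficients. The structural facts I would extract are: the drift corrections cancel, leaving only the Lipschitz term of size $L|e_k|h$; the terms $[b(X^h)-b(X^\circ)]\Delta_kW$, the area part of $D_k$, and the centred part of $\rho_k-\tilde\rho_k$ are all conditionally mean-zero given $\cF_{kh}$ and hence build a discrete martingale with $\E[|\cdot|^{2p}\mid\cF_{kh}]\le C(|e_k|^{2p}h^p+h^{2p}+h^{3p})$, the decisive $h^{2p}$ being $\E|A^{jl}_k|^{2p}\le Ch^{2p}$; and the conditional means of $\rho_k-\tilde\rho_k$ are $O(h^2)$.

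I would then split $e_n=\cA_n+\cM_n$ into finite-variation and martingale parts and estimate $g_n:=\E|e_n|^{2p}$. Discrete Burkholder--Davis--Gundy with Jensen over the $n\le T/h$ cells gives $\E|\cM_n|^{2p}\le C(T/h)^{p-1}\sum_{k<n}\E|\Delta\cM_k|^{2p}\le Ch\sum_{k<n}g_k+Ch^p$, where the inhomogeneous $Ch^p$ is exactly the accumulated L\'evy-area error; H\"older's inequality bounds the finite-variation part by $Ch\sum_{k<n}g_k+Ch^{2p}$. This produces the recursion $g_n\le Ch\sum_{k<n}g_k+Ch^p$, and discrete Gronwall yields $\max_{kh\le T}g_k\le Ce^{CT}h^p\le Ch^{p-1}$. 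The constants depend only on $T,p,m,\|a\|_{C^1_b},\|b\|_{C^2_b}$, so $C$ is independent of the starting point; the uniform-in-$h$ moment bound $\sup_{kh\le T}\E|X^h_{kh}|^{2p}\le C(1+|x|^{2p})$, needed to run the expansions, follows from the same one-step representation by an identical argument. To pass from grid points to arbitrary $t\in[kh,(k+1)h)$ I would combine the pathwise cell bound of the first step with the corresponding It\^o estimate for $X^\circ$ to get $\E|X^\circ_t-X^h_t|^{2p}\le C(g_k+h^p)\le Ch^{p-1}$, whence $\sup_{t\in[0,T]}\E|X^\circ_t-X^h_t|^{2p}\le Ch^{p-1}$ as claimed. (The argument in fact delivers the sharper $h^p$, consistent with the optimal strong order $\tfrac12$.)

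The main obstacle is the bookkeeping of the multidimensional iterated integrals. One must recognise that the piecewise-linear driver has vanishing L\'evy area on each cell, so $X^h$ reproduces only the symmetric iterated integral whereas $X^\circ$ carries the full one; the surviving antisymmetric area terms are the leading source of error. These are of size $O(h)$ per cell over $O(h^{-1})$ cells, so a triangle-inequality bound would yield only $O(1)$; the rate is recovered solely because their conditional-mean-zero structure allows one to invoke BDG rather than summing absolute values. Guaranteeing simultaneously that the deterministic drift corrections cancel to order $o(h)$ --- which is where the identity $\tilde a=a+c$ and the exact hypotheses $a\in C^1_b$, $b\in C^2_b$ enter --- is the other point demanding care.
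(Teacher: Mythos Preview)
This statement is not proved in the present paper; it is quoted from Brze\'zniak--Flandoli \cite{BrzFla-95} as part of the literature survey in the Introduction, so there is no ``paper's own proof'' to compare against. Your discrete-time strategy---one-step Taylor/It\^o--Taylor expansions on each cell, identification of the antisymmetric L\'evy-area discrepancy as the leading local error, and a martingale/BDG summation followed by discrete Gronwall---is a standard and correct route to the result, and as you observe it actually delivers the sharper bound $Ch^p$ rather than $Ch^{p-1}$.

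One small overclaim: under only $a\in C^1_b$ (so that $Da$ is bounded but not necessarily Lipschitz) you cannot Taylor-expand $a$ to second order, and the conditional means $\E[\rho_k-\tilde\rho_k\mid\cF_{kh}]$ can in general only be shown to be $O(h^{3/2})$, not $O(h^2)$. This is harmless for your argument: summing $n\le T/h$ increments of size $O(h^{3/2})$ still yields a finite-variation part of size $O(h^{1/2})$, hence $\E|\cA_n|^{2p}\le Ch\sum_k g_k+Ch^{p}$ rather than $+Ch^{2p}$, and the Gronwall step is unchanged since the martingale part already contributes $Ch^p$.

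For context, the paper's own strong-convergence theorem for the L\'evy-driven Marcus case (Theorem~\ref{thm:Xx-Xhx}) follows a different, continuous-time route: both $X$ and the approximation $\bar X^h$ are written as It\^o SDEs, the difference is decomposed termwise into pieces $F^\cdot_1,F^\cdot_2,F^\cdot_3$ (corresponding respectively to $X_s$ vs.\ $\bar X^h_s$, $\bar X^h_s$ vs.\ $X^h_{n^h_s h}$, and the mismatch of the exact generators), and a continuous Gronwall argument is run on $\E\sup_{s\le t}|X_s-\bar X^h_s|^p$. That method avoids the explicit L\'evy-area bookkeeping at the price of one more derivative on the coefficients, whereas your discrete scheme isolates precisely why the rate is governed by the area terms.
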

Shmatkov \cite{shmatkov2006rate} obtained the following a.s.\ convergence.
\begin{thm}[Theorem 2.3.2 in \cite{shmatkov2006rate}]
\label{t:14}
Let $a$ be Lipschitz continuous and $b\in C^3(\bR^d,\bR^{d\times m})$ with bounded derivatives.
Let $X_0=\xi$ be a random variable independent of $W$.
Then for every $T\in(0,1/2)$ and any $\gamma\in(0,1/2)$ there is a finite random variable $C=C(\omega)$ such that
for $h>0$ small enough
$$
\sup_{t\in[0,T]} |X^{\circ}_t(\omega) - X^{h}_t(\omega) |\leq C(\omega) h^{\gamma}.
$$
\end{thm}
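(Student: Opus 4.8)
To prove Theorem~\ref{t:14} the plan is to upgrade an $L^{2p}$ bound for the \emph{maximal} error $\sup_{t\in[0,T]}|X^\circ_t-X^h_t|$ to an almost sure rate via a Borel--Cantelli argument along a geometric sequence of step sizes, and then to pass from that sequence to all small $h$. Two features of the problem fix the target exponent. First, the polygonal path $W^h$ differs from $W$ between grid points by an amount of order $\sqrt{h\log(1/h)}$ (the a.s.\ modulus of continuity of $W$), which forces an $h^{1/2}$ barrier for the uniform-in-time error, so $\gamma<\tfrac12$ is best possible. Second, the deterministic ODE defining $X^h$ produces in the limit exactly the Stratonovich correction, because the Newton--Leibniz value $\tfrac12(\Delta_kW)^2$ of the iterated integral of $W^h$ over $[kh,(k+1)h)$ keeps the extra $\tfrac12h$ per step that the It\^o iterated integral $\tfrac12((\Delta_kW)^2-h)$ lacks. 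Throughout one may assume that $\xi$ has moments of all orders, or localize on $\{|\xi|\le R\}$ and let $R\to\infty$, so that all moments below are finite.

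\emph{Step 1: a maximal $L^{2p}$ estimate.} For every $p\ge1$ I would establish
$$
\E\sup_{t\in[0,T]}|X^\circ_t-X^h_t|^{2p}\le C_{p,T}\,h^{\,p-1},\qquad h\in(0,1],
$$
the maximal counterpart of the estimate of Brze\'zniak and Flandoli quoted above, proved along the same lines. Writing the Stratonovich equation in It\^o form, $\di X^\circ=(a(X^\circ)+c(X^\circ))\,\di t+b(X^\circ)\,\di W$ with $c$ the correction built from $b$ and its first derivative, one expands on each block $[kh,(k+1)h)$ both the solution of $\dot X^h=a(X^h)+b(X^h)\dot W^h$ (whose driver has constant slope $(\Delta_kW)/h$, with $\Delta_kW:=W_{(k+1)h}-W_{kh}$) and $X^\circ$ itself; by the cancellation noted above the two expansions share the same leading increments $a(\cdot)h+b(\cdot)\Delta_kW+\tfrac12(\text{quadratic})(\Delta_kW)^2$, so the one-step error reduces to the Taylor remainders and to the propagation of the accumulated error through the Lipschitz coefficients. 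The hypothesis $b\in C^3$ supplies sufficiently sharp control of these remainders (and, in Step~3, of the increments $h\mapsto X^h$) and is presumably stronger than strictly necessary; the linear martingale increments $b(X^h_{kh})\Delta_kW$ and the centred quadratic fluctuations are handled by the Burkholder--Davis--Gundy inequality, and summing over the $\lceil T/h\rceil$ blocks and invoking Gronwall's lemma closes the bound, the between-grid contribution being absorbed by the modulus of continuity of $W$.

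\emph{Step 2: Borel--Cantelli along $h_n=2^{-n}$.} By Chebyshev's inequality and Step~1,
$$
\P\Bigl(\sup_{t\in[0,T]}|X^\circ_t-X^{h_n}_t|>h_n^{\gamma}\Bigr)\le C_{p,T}\,h_n^{\,p-1-2p\gamma}=C_{p,T}\,2^{-n(p(1-2\gamma)-1)}.
$$
Since $\gamma<\tfrac12$ gives $1-2\gamma>0$, choosing $p>1/(1-2\gamma)$ makes the exponent strictly positive, so the series over $n$ converges and the Borel--Cantelli lemma yields a finite random $n_0(\omega)$ with $\sup_{t\in[0,T]}|X^\circ_t-X^{h_n}_t|\le h_n^{\gamma}$ for all $n\ge n_0(\omega)$.

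\emph{Step 3: from the subsequence to all small $h$, and the main obstacle.} For $h\le 2^{-n_0(\omega)}$ pick $n$ with $2^{-(n+1)}<h\le 2^{-n}$; it remains to bound $\sup_{t\in[0,T]}|X^h_t-X^{h_n}_t|$ by $C(\omega)h^{\gamma}$. I would obtain this from a block-uniform maximal estimate
$$
\E\sup_{2^{-(n+1)}<h\le 2^{-n}}\ \sup_{t\in[0,T]}|X^\circ_t-X^h_t|^{2p}\le C_{p,T}\,2^{-n(p-1)},
$$
proved by a Garsia--Rodemich--Rumsey type chaining in the step-size variable (for which $b\in C^3$ again furnishes the moment bounds on the increments of $h\mapsto X^h$), followed by a second Borel--Cantelli step over the dyadic blocks; combined with Step~2 this gives the full statement. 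The restriction $T<\tfrac12$ is inessential — once the rate is known on an interval of that length one iterates over consecutive intervals, using the Lipschitz dependence of the flows $X^\circ$ and $X^h$ on the initial datum — and I expect it to be a bookkeeping artifact of the Gronwall constants. The genuine difficulty is the block-uniform estimate: since $X^h$ and $X^{h_n}$ are driven by polygonalizations built on non-nested grids, the error ODE $\dot X^h-\dot X^{h_n}$ cannot be compared term by term (a naive Gronwall estimate would pick up the total variation $\sum_k|\Delta_kW|\sim T/\sqrt h$ of $W^h$ and diverge), so one must exploit the Wong--Zakai cancellation once more, now uniformly in $h$ across a dyadic block.
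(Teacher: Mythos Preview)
The paper does not contain a proof of this theorem. Theorem~\ref{t:14} is quoted in the Introduction as a result from the literature (Shmatkov's thesis, \cite{shmatkov2006rate}); it appears there only to set the context for the paper's own results on L\'evy-driven Marcus SDEs, and no argument for it is supplied anywhere in the text. There is therefore no ``paper's own proof'' against which to compare your proposal.

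That said, a brief comment on the proposal itself. Your overall architecture --- a maximal $L^{2p}$ bound of Brze\'zniak--Flandoli type, then Borel--Cantelli along a geometric sequence $h_n=2^{-n}$, then an interpolation to arbitrary small $h$ --- is the natural route to an a.s.\ rate and is essentially how such statements are established. Steps~1 and~2 are standard and correctly laid out. The honest acknowledgement that Step~3 is the real obstacle is well placed: the block-uniform estimate you propose (via GRR chaining in the step-size variable) is plausible but is itself a substantial piece of work, and without it the argument is incomplete. An alternative, often simpler, way to close Step~3 is to avoid comparing $X^h$ with $X^{h_n}$ directly and instead prove a one-parameter continuity estimate of the form $\E\sup_{t\le T}|X^{h}_t-X^{h'}_t|^{2p}\le C\,|h-h'|^{\alpha}$ for $h,h'$ in a dyadic block, which then feeds straight into Kolmogorov's continuity criterion in the variable $h$; this sidesteps the non-nested-grid issue you flag.
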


The Wong--Zakai question can be also posed for more general SDEs driven by L\'evy processes (or semimartingales with jumps).
The corresponding limit process turn out to be a solution of the so-called  Marcus (canonical) SDE.

Marcus (canonical) SDEs $\di X=a(X)\,\di t + b(X)\circ \di W + c(X)\diamond\di Z$ driven by a Brownian motion $W$ and a
pure jump L\'evy process $Z$
have been introduced by S.\ Marcus in \cite{Marcus-78,Marcus-81} as generalizations of Stratonovich SDEs.
If the driving process $Z$ has a jump $\Delta Z_\tau$ at time instant
$\tau$, the solution $X$ makes a jump of the size $\varphi^{\Delta Z_\tau}(1;X_{\tau-})-X_{\tau-}$
where $\varphi$ is a solution of a supplementary
nonlinear
ODE $\dot \varphi^z(u;x)=c(\varphi^z(u;x))z$ with the initial condition $\varphi^z(0;x)=x$
that is defined with respect to a fictitious time $u\in[0,1]$.
The interpretation of this construction is the following:
the jump of the process $X$ is a mathematical idealization of the infinitely
fast motion along the integral curve of the vector field $c(\cdot)\Delta Z_\tau$.

Similarly to Stratonovich SDEs, Marcus equations formally follow the rules of the conventional calculus.
The It\^o formula for solutions of Marcus SDEs formally coincides
with the Newton--Leibnitz change of variables formula. The application area of Marcus SDEs also verlaps with that
of Stratonovich SDEs, see, e.g.\ \cite{ChePav-14,HarPav-23}.
First, they are indispensable for describing random dynamics with jumps on manifolds. Second,
they can be seen as Wong--Zakai-type limits of continuous random ODEs.

Indeed, for a pure jump L\'evy process $Z$ we define the polygonal approximations $Z^h$ given by
\begin{equation}
 \label{e:polZ}
Z^h_t=Z_{kh}+\frac{t-kh}{h}\Big(Z_{(k+1)h}-Z_{kh}\Big),\quad t\in[kh,(k+1)h),\quad k\in\mathbb N_0.
\end{equation}
Let $c(\cdot)$ be a globally Lipschitz continuous function.
Then the random ODE $\dot X^h=a(X^h) + b(X^h)\dot W^h + c(X^h)\dot Z^h$ has a unique continuous
solution a.s.
How can one quantify the convergence $X^h\to X$ as $h\to 0$?

First we note, that since $Z$ has jumps, the approximations
$Z^h$ do not converge to $Z$ either uniformly or in the $J_1$-Skorokhod topology.
However, for each $t\in[0,\infty)$ we have $Z^h_t\to Z_t$ a.s. Alternatively, $Z^h$ converges to $Z$ a.s.\ in the (strong)
$M_1$-Skorokhod topology (see, e.g.,\ \cite{PavlyukevichR-15,HarPav-19}). It is clear, that the same restrictions apply to
the convergence of the solutions $X^h\to X$.

Thus, in Theorem 1 in \cite{Marcus-78},
S.\ Marcus proved that for the SDE driven by independent Poisson processes,
and polygonal approximations of the noise,
$X_t^h\to X_t$ a.s.\ for
almost all $t\in [0,T]$. Similar results in a more general setting were obtained in \cite{Marcus-78}, Theorems 2 and 3.

Kunita \cite{Kunita-95} showed in Corollary from Theorem 4 that the finite dimensional distributions of $X^h$ converge weakly
to the corresponding finite dimensional distributions of $X$.

Finally, Kurtz \emph{et al.} \cite{KurtzPP-95}
considered the mollified absolutely continuous approximations $Z^h_t:=h^{-1}\int_{(t-h)_+}^{t} Z_s\,\di s$ and showed that
$X_t^h\to X_t$ in probability for all but countably many $t\in[0,T]$, see Theorem 6.5 in \cite{KurtzPP-95}.

The present work is devoted to the thorough analysis of the strong convergence $X^h\to X$
which includes the study of uniform convergence with respect to the initial value and determination
of the convergence rate.
The main result of this paper is the improved analog of Theorem \ref{t:11} for solutions of Marcus SDEs.

In Section \ref{s:setting} we formulate the set of assumptions and
present the main results. The proofs of are given in Sections \ref{s:flows}--\ref{s:main proof}.

In this paper, we will use the following notation. The norm and scalar product in Euclidean spaces are denoted by $|\cdot|$ and
$\langle\cdot,\cdot\rangle$. For a matrix $A$, $\|A\|$ denotes its operator norm, and for a function $f=f(x)$, $\|f\|$ denotes
its supremum norm. For a function $f=f(x)$, the (partial) derivatives are denoted by
$\partial_x f=f_x$.
We write $f(x)\leq_C g(x)$ if there exists a constant $C\in (0,\infty)$ such that
$f(x)\leq C g(x)$ for all $x$, and we do not need this constant for further reference.
In the case that the functions $f$, $g$ depend on additional parameters (e.g., $t$, $h$, etc)
the above inequality should hold true uniformly over these parameters.

\section{Setting and the main result\label{s:setting}}

Let $(\Omega,\rF,\mathbb F,\P)$ be a filtered probability space satisfying the usual hypotheses. For $m\in\mathbb N$,
let $W$ be an $m$-dimensional
Brownian motion and $Z$ be an independent $m$-dimensional square integrable L\'evy process
with the characteristic function
$$
\E \ex^{\i \langle Z_t,\xi\rangle}=\exp\Big(t\int (\ex^{\i \langle z,\xi\rangle}-1-\i \langle z,\xi\rangle)\,\nu(\di z)\Big),
\quad \xi\in\bR^m,\  t\in[0,\infty),
$$
where the L\'evy measure $\nu$ on $(\bR^m\backslash{\{0\}},\rB(\bR^m\backslash{\{0\}}))$
is such that $\int_{|z|>0} |z|^2\,\nu(\di z)<\infty$.
By the L\'evy--It\^o decomposition, the L\'evy process $Z$ can be represented as an integral
$$
Z_t=\int_0^t \int_{|z|> 0} z\, \widetilde N(\di s,\di z)
$$
with respect to the compensated Poisson random measure $\widetilde N$ on $[0,\infty)\times (\bR^m\backslash{\{0\}})$
with the intensity measure $\nu(\di z)\,\di t$.
The reader may consult the \cite{Applebaum-09,Kunita-04,Sato-99} for more information on L\'evy processes
and stochastic calculus.

For $d\in\mathbb N$, let us consider a vector-valued function
$a\colon \mathbb{R}^d\to\mathbb{R}^d$, $a=(a^i(\cdot))_{i=1,\dots,d}$,
and matrix-valued functions $b\colon \mathbb{R}^d\to\mathbb{R}^{d\times m}$, $b=(b^i_j(\cdot))_{i=1,\dots,d, j=1,\dots, m}$, and
$c\colon\mathbb{R}^d\to\mathbb{R}^{d\times m}$, $c=(c^i_j(\cdot))_{i=1,\dots,d, j=1,\dots, m}$.
For the mapping $b$, $b^i$ denotes the $i$-th row of the matrix $b$, and $D b^i(x)$ denotes its gradient matrix.
By $\operatorname{tr}(Db(x)b(x))$ we mean a vector with coordinates $\operatorname{tr}(Db^i(x)b(x))$.
The (canonical) Marcus SDE under consideration is formally written in the following form:
\begin{equation}
\label{e:SDEM}
X_t=x+\int_0^t a(X_s)\, \di s+\int_0^t b(X_s)\circ\di W_s+\int_0^t c(X_s)\diamond \di Z_s,\quad x\in\bR^d,\ t\in[0,\infty).
\end{equation}
As already mentioned above, the integral $\int\circ\, \di W$ is understood in the Stratonovich sense.
The jumps of the process $X$ are determined as follows.
For each $x\in\bR^d$ and $z\in\bR^m$, consider the auxiliary ODE
\ba
\label{e:ophi-eq}
\frac{\di}{\di u} \varphi^z(u;x)&=c(\varphi^z(u;x))z,\quad u\in[0,1],\\
\varphi^z(0;x)&=x.
\ea
Assume that this ODE has a unique global solution and define
the Marcus flow
\ba
\label{e:op}
&\varphi^z(x):=\varphi^z(1;x).
\ea
Then the jump size of $X$ at time instant $\tau$ is defined as
$\Delta X_\tau=\varphi^{\Delta Z_\tau}(X_{\tau-})-X_{\tau-}$. Eventually, the Marcus SDE \eqref{e:SDEM} is
understood as the following It\^o SDE driven by the Brownian motion $W$ and the compensated Poisson random measure $\widetilde N$:
\ba
\label{e:SDEI}
X_t=x&+
\int_0^t a(X_{s})\,\di s
+    \int_0^t b(X_{s})\,\di W_s
+  \frac12 \int_0^t \operatorname{tr}(Db(X_{s})b(X_{s}))  \,\di s\\
&+ \int_0^t\int_{|z|>0} \Big(\varphi^z(X_{s-}) -X_{s-}  \Big) \, \widetilde N(\di z,\di s) \\
&+ \int_0^t\int_{|z|>0} \Big(\varphi^z(X_{s}) -X_{s} - c(X_{s})z \Big)\, \nu(\di z)\,\di s.
\ea
Let us formulate the conditions for the existence and uniqueness of solutions to \eqref{e:SDEM} and recall some of their properties.
To this end, we introduce the following set of assumptions on the coefficients $a$, $b$ and $c$ that will be used in the sequel.

\smallskip

\noindent
\textbf{H$_{a}^{(r)}$:}
For $r\in\mathbb N$, the following holds true:
\ban
&a\in C^r(\bR^d,\bR^d) \text{ and }  &&\|\partial^\alpha  a^i\|<\infty ,
\quad 1\leq i \leq d, \ 1\leq |\alpha|\leq r,\quad 1\leq |\alpha|\leq r.
\ean

\smallskip

\noindent
\textbf{H$_{b}^{(r)}$:}
For $r\in\mathbb N$, the following holds true:
\ban
&b\in C^r(\bR^d,\bR^{d\times m})   \text{ and }   &&\|\partial^\alpha b^i_j\|<\infty ,\quad  1\leq i\leq d, \ 1\leq j\leq m, \ 1\leq |\alpha|\leq r;\\
&&&\| b^i_j\cdot \partial^\alpha b^k_l\|<\infty, \quad 1\leq i,k\leq d, \ 1\leq j,l\leq m, \ 2\leq |\alpha|\leq r.
\ean

\smallskip

\noindent
\textbf{H$_{c}^{(r)}$:}
For $r\in\mathbb N$, the following holds true:
\ban
&c\in C^r(\bR^d,\bR^{d\times m})   \text{ and }    &&\|\partial^\alpha c^i_j\|<\infty ,\quad  1\leq i\leq d, \ 1\leq j\leq m,
\quad 1\leq |\alpha|\leq r;\\
&&&\| c^i_j\cdot \partial^\alpha c^k_l\|<\infty, \quad 1\leq i,k\leq d, \ 1\leq j,l\leq m,\ 2\leq |\alpha|\leq r.
\ean

\begin{rem}
For $r=1$,  the conditions including derivatives of the second and higher orders are void.
\end{rem}

For the mapping $c\colon \mathbb{R}^d\to\mathbb{R}^{d\times m}$, by $Dc(x)$ we denote its gradient tensor
$Dc(x)=\{\partial_{x^i} c^j_k(x)\}_{1\leq i,j\leq d, 1\leq k\leq m}$.
For each $x\in\bR^d$, we define
\ban
\|Dc(x)\|:=\sup_{\bR^m\ni|z|\leq 1}\|D (c(x)z)\|
\ean
and we set
\ban
\|Dc\|:=\sup_{x\in\mathbb{R}^d}\|Dc(x)\|.
\ean

Finally, we introduce the following assumption concerning the integrability of the L\'evy measure $\nu$.

\smallskip

\noindent
\textbf{H$_{\nu,A+}$:} For $A\in[0,\infty)$, there is $\e\in(0,\infty)$ such that the tails of the L\'evy measure $\nu$ are exponentially
integrable:
$$
\int_{|z|>1} \ex^{(A+\e)|z|}\nu(\di z)<\infty.
$$
\begin{rem}
Let $A\in[0,\infty)$. By Theorems 25.3 and 25.18 from Sato \cite{Sato-99}, condition \textbf{H$_{\nu,A+}$}
implies that for each $T\in[0,\infty)$ and
any $p\in[0,\infty)$
$$
\E \sup_{t\in[0, T] }|Z_t|^p\ex^{A|Z_t|}<\infty.
$$
If the support of $\nu$ is bounded, the condition \textbf{H$_{\nu,A+}$} is satisfied for any $A\in [0,\infty)$.
\end{rem}

\begin{thm}
\label{t:existenceSDE}
Assume that conditions \emph{\textbf{H$^{(1)}_{a}$}}, \emph{\textbf{H$^{(2)}_{b}$}}, \emph{\textbf{H$^{(2)}_{c}$}}
and \emph{\textbf{H$_{\nu,p\|Dc\|+}$}} hold true for some $p\in[2,\infty)$.
Then there is a unique strong solution $X=(X_t)_{t\geq 0}$ of the SDE \eqref{e:SDEI}. Moreover,
for any $T\in[0,\infty)$ there is a constant $C\in(0,\infty)$ such that the following holds true:
\ba
\label{e:sup x}
\E \sup_{0\leq t\leq T} |X_t(x) |^p & \leq C(1+ |x|^p),\quad x\in\bR^d,\\
\E \sup_{0\leq t\leq T} |X_t(x)-X_t(y) |^p & \leq C |x-y |^p,\quad x,y\in\bR^d.
\ea
\end{thm}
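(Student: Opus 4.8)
The plan is to recast \eqref{e:SDEI} as a standard It\^o SDE driven by $W$ and $\widetilde N$ with globally Lipschitz coefficients of linear growth, to invoke the classical existence and uniqueness theory, and then to derive \eqref{e:sup x} by a localization--Gronwall argument in which \textbf{H$_{\nu,p\|Dc\|+}$} is used to close every integral against $\nu$. The first step is to record the needed properties of the Marcus flow $\varphi^z=\varphi^z(1;\cdot)$. Applying Gronwall's inequality to \eqref{e:ophi-eq}, to $\varphi^z(\cdot;x)-\varphi^z(\cdot;y)$, and to the linearized equation $\frac{\di}{\di u}D_x\varphi^z(u;x)=D\big(c(\varphi^z(u;x))z\big)D_x\varphi^z(u;x)$ (available under \textbf{H$^{(2)}_{c}$}), and using $\|D(c(y)z)\|\leq\|Dc\|\,|z|$, one obtains
\ban
|\varphi^z(x)| &\leq_C (1+|x|)\,\ex^{\|Dc\||z|}, \qquad \|D_x\varphi^z(x)\|\leq \ex^{\|Dc\||z|},\\
|\varphi^z(x)-\varphi^z(y)| &\leq |x-y|\,\ex^{\|Dc\||z|},\\
|\varphi^z(x)-x-c(x)z| &\leq_C (1+|x|)\,|z|^2\ex^{\|Dc\||z|},
\ean
and, crucially, the growth-factor-free second-order difference bound
\ben
\big|(\varphi^z(x)-x-c(x)z)-(\varphi^z(y)-y-c(y)z)\big|\leq_C |x-y|\,\min\!\big(|z|^2,\,|z|\,\ex^{\|Dc\||z|}\big).
\een
The point of the last estimate is that for $|z|$ bounded the bound $|x-y|\,|z|^2$ holds without the prefactor $1+|c(x)|$ that a naive Gronwall bound would produce, and this is exactly where the product-type part $\|c^i_j\partial^\alpha c^k_l\|<\infty$ of \textbf{H$^{(2)}_{c}$} is needed; for large $|z|$ one simply uses the Lipschitz bounds on $\varphi^z$ and on $c$.

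\emph{Reduction and well-posedness.} I would write \eqref{e:SDEI} in the form $X_t=x+\int_0^t B(X_s)\,\di s+\int_0^t b(X_s)\,\di W_s+\int_0^t\int_{|z|>0} J^z(X_{s-})\,\widetilde N(\di z,\di s)$ with $J^z(x)=\varphi^z(x)-x$ and $B(x)=a(x)+\tfrac12\operatorname{tr}(Db(x)b(x))+\int_{|z|>0}(\varphi^z(x)-x-c(x)z)\,\nu(\di z)$. Since $p\geq 2$, \textbf{H$_{\nu,p\|Dc\|+}$} also gives \textbf{H$_{\nu,2\|Dc\|+}$} and \textbf{H$_{\nu,\|Dc\|+}$}, hence $\int_{|z|>0}|z|^q\ex^{q\|Dc\||z|}\,\nu(\di z)<\infty$ for $q\in\{1,2,p\}$; combined with the flow estimates above this yields $\int|J^z(x)|^2\nu(\di z)\leq_C(1+|x|)^2$, $\int|J^z(x)-J^z(y)|^2\nu(\di z)\leq_C|x-y|^2$, $\int|J^z(x)|^p\nu(\di z)\leq_C(1+|x|)^p$, and it shows that $B$ is globally Lipschitz of linear growth: $a$ by \textbf{H$^{(1)}_{a}$}; $x\mapsto\operatorname{tr}(Db(x)b(x))$ because the entries of its gradient are sums of products $\partial b\cdot\partial b$ and $b\cdot\partial^2 b$, all bounded under \textbf{H$^{(2)}_{b}$}; and $x\mapsto\int(\varphi^z(x)-x-c(x)z)\,\nu(\di z)$ by the two displayed remainder estimates. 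As $b$ is likewise globally Lipschitz of linear growth under \textbf{H$^{(2)}_{b}$}, the classical existence and uniqueness theorem for It\^o SDEs with jumps (see, e.g., \cite{Applebaum-09}, Chapter~6, or \cite{IW89}) yields the unique strong solution $X$.

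\emph{Moment bounds.} Fix $T$ and localize by $\tau_n=\inf\{t:|X_t(x)|\geq n\}\wedge\inf\{t:|Z_t|\geq n\}\wedge n$, so that $g_n(t):=\E\sup_{s\leq t\wedge\tau_n}|X_s(x)|^p<\infty$ (the jump at $\tau_n$ being controlled since $|\varphi^z(y)|\leq_C(1+|y|)\ex^{\|Dc\||z|}$ and $|z|\leq 2n$ there). Raising the integral equation to the $p$-th power, passing to the supremum and then to the expectation, estimating the $\di s$-integrals by H\"older, the Brownian integral by the Burkholder--Davis--Gundy inequality, and the compensated Poisson integral by Kunita's $L^p$-inequality (its two terms being dominated via $\int|J^z|^2\nu$ and $\int|J^z|^p\nu$ --- this is where the exponent $p\|Dc\|$ in \textbf{H$_{\nu,p\|Dc\|+}$} enters), and using $\int|\varphi^z(x)-x-c(x)z|\,\nu(\di z)\leq_C(1+|x|)$, one arrives at $g_n(t)\leq C_1(1+|x|^p)+C_2\int_0^t g_n(s)\,\di s$ with $C_1,C_2$ independent of $n$; Gronwall gives $g_n(t)\leq C_1\ex^{C_2 T}(1+|x|^p)$, and since $\P(\tau_n\leq T)\leq n^{-p}g_n(T)\to 0$, Fatou's lemma gives the first line of \eqref{e:sup x}. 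The second line follows in the same way for $Y_t=X_t(x)-X_t(y)$: the increments of $a$, $b$ and $\operatorname{tr}(Db(\cdot)b(\cdot))$ are $\leq_C|Y|$; one has $\int|J^z(X_s(x))-J^z(X_s(y))|^q\nu(\di z)\leq_C|Y_s|^q$ for $q\in\{2,p\}$ by the Lipschitz flow bound and \textbf{H$_{\nu,p\|Dc\|+}$}; and the corresponding increment of $\int(\varphi^z(\cdot)-\cdot-c(\cdot)z)\,\nu(\di z)$ is $\leq_C|Y_s|$ by the growth-factor-free difference bound; Gronwall then yields $\E\sup_{s\leq t\wedge\tau_n}|Y_s|^p\leq_C|x-y|^p$, and one lets $n\to\infty$.

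\emph{Main obstacle.} The technical heart is the growth-factor-free second-order difference estimate above: a direct Gronwall bound on $D_x(\varphi^z(x)-x-c(x)z)$ carries a factor $1+|c(x)|$, and such a factor would force $2p$-th rather than $p$-th exponential moments of $Z$ in the moment estimates and break the scheme; eliminating it requires the full strength of \textbf{H$^{(2)}_{c}$} --- in particular the boundedness of $c^i_j\partial^\alpha c^k_l$ --- together with a split into small and large $|z|$. The remaining care lies in matching each factor $\ex^{q\|Dc\||z|}$ to the power $q$ one is taking, so that every integral against $\nu$ is closed precisely by \textbf{H$_{\nu,p\|Dc\|+}$}, and in the routine localization required to make the Gronwall arguments rigorous.
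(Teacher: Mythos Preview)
Your proposal is correct and follows essentially the same route as the paper. The paper's proof is terser: it directly cites Theorems~3.1 and~3.2 of Kunita~\cite{Kunita-04} and only verifies their hypotheses --- namely, the Lipschitz continuity and linear growth of $a$, $b$, $\operatorname{tr}(Db\,b)$ and of $x\mapsto\int(\varphi^z(x)-x-c(x)z)\,\nu(\di z)$, together with the pointwise bounds on $\varphi^z(x)-x$ from Lemma~\ref{t:phi} that play the role of Kunita's condition~(3.2). Your argument unpacks exactly what Kunita's theorems do (localize, apply BDG/Novikov--Kunita maximal inequalities, Gronwall), so the content is the same. Your identification of the ``growth-factor-free'' second-order difference bound as the place where the product part $\|c^i_j\,\partial^\alpha c^k_l\|<\infty$ of \textbf{H$^{(2)}_{c}$} is genuinely used is spot on; the paper hides this inside the omitted proof of Lemma~\ref{t:phi}(6), which gives the equivalent bound $|z|^2\ex^{\|Dc\||z|}|x-y|$.
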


The numerical approximation of the solution $(X_t)_{t\geq 0}$ is based on the non-linear Wong--Zakai  scheme. For the time step
$h\in(0,1]$,
we approximate the trajectories of $W$ and $Z$ by polygonal lines defined in \eqref{e:polW} and \eqref{e:polZ} respectively,
and consider a random non-autonomous ODE
\ba
\label{SDE_M_h}
X_t^h&=x+\int_0^t \Big(a(X^h_s)+b(X^h_s) \dot W_s^h+c(X^h_s)\dot Z_s^h\Big)\, \di s,\quad t\in[0,\infty).
\ea
The solution of the ODE \eqref{SDE_M_h} exists and is
unique a.s.\ under assumptions of global Lipschitz continuity of the coefficients $a$, $b$ and $c$, and is obtained
by gluing together the solutions of random autonomous ODEs
on each interval $[kh,(k+1)h]$, $k\in \mathbb N_0$.

As it was mentioned above, since the trajectories $t\mapsto X^h_t$ are continuous,
one cannot expect convergence of $X^h$ to $X$ in the uniform or
in the Skorokhod $J_1$ topologies. However one can ask if there is uniform convergence of
the approximations evaluated at the knots $\{kh\colon 0\leq kh\leq T\}$.

To this end, let us obtain the values $X^h_{kh}$ as follows.
For $\tau\in[0,\infty)$ and $w,z\in\bR^m$, consider the ODE
\ba
\label{e:eqpsi}
\frac{\di}{\di u} \psi(u)&=a(\psi(u))\tau+   b(\psi(u))w+c(\psi(u))z,\\
\psi(0)&=x,\quad u\in[0,1],
\ea
where $\psi(u)=\psi(u;x)=\psi(u;x;\tau,w,z)$.
Let
\ba
\label{e:psi}
\Psi(x; \tau,w,z):=\psi(1;x,\tau ,w,z).
\ea
Then the values $X^h_{kh}$ are obtained recursively as a time-discrete Wong--Zakai  numerical scheme:
\ba
\label{e:Euler-d}
X^h_0&=x,\\
X^h_{(k+1)h}&=\Psi(X^h_{kh}; h, W_{(k+1)h}-W_{kh},Z_{(k+1)h}-Z_{kh}),\quad k\in\mathbb N_0.
\ea

We also consider the time-continuous c\`adl\`ag extension of this scheme given by the process
\ba
\label{e:Euler}
\bar X^h_0&=x,\\
\bar X^h_t&=\Psi(\bar X^h_{kh}; t-kh, W_t-W_{k h},Z_t-Z_{k h}),\quad t\in (kh,(k+1)h],\quad k\in\mathbb N_0.
\ea
By construction, $\bar{X}^h_{kh}=X^h_{kh}$ for all $k\in \mathbb N_0$.
Moreover the trajectories $t\mapsto\bar X^h_t$ are c\`adl\`ag and the jump times of $Z$, $X$ and $\bar X^h$ coincide.

To prove strong convergence of the approximations $X^h$ to $X$, we impose certain growth and regularity
assumptions on the function $\Psi$ defined in \eqref{e:psi}.

\smallskip
\noindent
\textbf{H$_{\Psi,K}$:}
There exist $C\in(0,\infty)$ and $\kappa_1,\kappa_2,K\in[0,\infty)$
such that for all $x,y\in\mathbb{R}^d,\tau\in\mathbb{R}_+,w,z\in\mathbb{R}^m$
\ban
\left.
\begin{aligned}
&| \Psi(x;\tau,w,z)|, \ |\partial_{\iota_1} \Psi(x;\tau,w,z)|,\\
&|\partial_{\iota_1\iota_2} \Psi(x;\tau,w,z)|,\  |\partial_{\iota_1\iota_2\iota_3} \Psi(x;\tau,w,z)|\\
&\iota_1 , \iota_2,\iota_3\in\{\tau,w^1,\dots,w^m,z^1,\dots,z^m\}\\
\end{aligned}
\right\}
&\leq C(1+|x|)\ex^{\kappa_1\tau + \kappa_2|w| + K |z|}\\
\left.
\begin{aligned}
&|\Psi(x;\tau,w,z)- \Psi(y;\tau,w,z)|,\\
&|\partial_\tau \Psi(x;\tau,w,z)-\partial_\tau \Psi(y;\tau,w,z)|\\
&|\partial_{\tau \iota_1} \Psi(x;\tau,w,z)-\partial_{\tau \iota_1} \Psi(y;\tau,w,z)|\\
&|\partial_{\iota_1\iota_2} \Psi(x;\tau,w,z)-\partial_{\iota_1\iota_2} \Psi(y;\tau,w,z)|\\
&\iota_1\in\{w^1,\dots,w^m\},\  \iota_2\in\{z^1,\dots,z^m\}\\
\end{aligned}
\right\}
&\leq C|x-y|\ex^{\kappa_1\tau + \kappa_2|w| + K |z|}.
\ean
\begin{rem}
In the assumption \textbf{H$_{\Psi,K}$} we single out the constant $K\in[0,\infty)$
that will be related with the exponential integrability of the L\'evy process
$Z$, i.e., the existence of the expectations $\E\sup_{t\leq T} |Z_t|^p \ex^{K|Z_t|}$ for $p\in[0,\infty)$.
The constants $C$, $\kappa_1$ and $\kappa_2$ make no influence either on moment estimates of the numerical approximation
$\bar X^h$ or on the convergence rate due to the fact that $\E \sup_{t\leq T}|W_t|^p\ex^{\kappa_1 t +\kappa_2|W_t|}<\infty$ always.
\end{rem}

\begin{lem}
\label{l:bounded}
Let assumptions \emph{\textbf{H$_{a}^{(3)}$}}, \emph{\textbf{H$_{b}^{(3)}$}}
and \emph{\textbf{H$_{c}^{(3)}$}} hold true, and let additionally all the functions $a$, $b$ and $c$ be bounded.
Then condition \emph{\textbf{H$_{\Psi,K}$}} holds true for any
\ba
\kappa_1>5\|Da\|,\quad \kappa_2>5\|Db\|,\quad K>5\|Dc\|.
\ea
\end{lem}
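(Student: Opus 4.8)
The plan is to derive all the estimates in \textbf{H$_{\Psi,K}$} from the defining ODE \eqref{e:eqpsi} for $\psi=\psi(u;x;\tau,w,z)$ by differentiating it in the variables $x$, $\tau$, $w$, $z$ and controlling the resulting variational equations with Gr\"onwall's inequality. Set $g(y;\tau,w,z):=a(y)\tau+b(y)w+c(y)z$, so that $\frac{\di}{\di u}\psi=g(\psi;\tau,w,z)$, $\psi(0)=x$, and $\Psi(x;\tau,w,z)=\psi(1;x;\tau,w,z)$ as in \eqref{e:psi}. Boundedness of $a$, $b$, $c$ makes the vector field $g$ bounded, hence $\psi$ is defined on all of $[0,1]$; and $a,b,c\in C^3$ makes $g$ jointly $C^3$, so by the classical theorem on the smooth dependence of ODE solutions on initial data and parameters, $\psi(u;\cdot)$ — and hence $\Psi$ — is $C^3$ in $(x,\tau,w,z)$, which justifies all the differentiations below. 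Throughout I abbreviate $\mu:=\|Da\|\,\tau+\|Db\|\,|w|+\|Dc\|\,|z|$.

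The structural point I would exploit is that \emph{every} partial derivative $\partial^\beta\psi$ — with $\beta$ a multi-index in the variables $x^1,\dots,x^d,\tau,w^1,\dots,w^m,z^1,\dots,z^m$ — solves a linear ODE $\frac{\di}{\di u}\partial^\beta\psi=M(u)\,\partial^\beta\psi+F_\beta(u)$ with the \emph{same} homogeneous part $M(u):=D_y g(\psi(u);\tau,w,z)$, and that $\|M(u)\|\le\mu$ by the definitions of $\|Da\|$, $\|Db\|$, $\|Dc\|$. Expanding $\partial^\beta\bigl(g(\psi(u);\tau,w,z)\bigr)$ by the higher-order chain rule and using that $g$ is affine in $(\tau,w,z)$, the inhomogeneity $F_\beta$ is a finite sum of terms $\bigl(D_y^{l}\partial^{\beta''}_{(\tau,w,z)}g\bigr)(\psi(u))\,[\partial^{\gamma_1}\psi(u),\dots,\partial^{\gamma_l}\psi(u)]$ with $|\beta''|\le1$, $|\gamma_i|\ge1$ and $|\beta''|+\sum_i|\gamma_i|=|\beta|$, from which the single term equal to $M(u)\,\partial^\beta\psi$ has been removed; hence every summand of $F_\beta$ has either $l\ge2$ factors or $\sum_i|\gamma_i|\le|\beta|-1$. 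The key estimate on the coefficients is that each $D_y^{l}\partial^{\beta''}_{(\tau,w,z)}g$, for $l\le3$, is either a plain bounded function — a derivative of $a$, or of a column of $b$ or $c$, produced when $|\beta''|=1$ — or a bounded function multiplied by one of $\tau,|w|,|z|$ (when $|\beta''|=0$ and $l\ge2$); in either case its norm is at most $C_\e\,\ex^{\e(\tau+|w|+|z|)}$ for every $\e>0$, because any polynomial is dominated by an exponential of arbitrarily small rate.

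With this in hand I would prove, by induction on $k=|\beta|\in\{1,2,3\}$, that $\sup_{u\in[0,1]}|\partial^\beta\psi(u)|\le C\,\ex^{(2k-1)\mu+\e(\tau+|w|+|z|)}$. The base case $k=1$ is immediate: the equation for $\partial^\beta\psi$ is either homogeneous with $|\partial^\beta\psi(0)|=1$, or has a bounded inhomogeneity and zero initial datum, and Gr\"onwall's inequality together with $\int_0^1\ex^{\mu(1-s)}\,\di s\le\ex^{\mu}$ yields the $\ex^{\mu}$ bound. In the inductive step one multiplies the bounds $\ex^{(2|\gamma_i|-1)\mu}$ for the factors in each summand of $F_\beta$, so that the product exponent is $\bigl(2\sum_i|\gamma_i|-l\bigr)\mu\le(2k-2)\mu$ (using $l\ge2$ when $\sum_i|\gamma_i|=k$), absorbs the coefficient into the $\e$-term, and gains one further $\ex^{\mu}$ from the Gr\"onwall integration (since $\partial^\beta\psi(0)=0$ for $|\beta|\ge2$). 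Evaluating at $u=1$ and taking $k=3$ gives $|\partial_{\iota_1\iota_2\iota_3}\Psi|\le C\,\ex^{(5\|Da\|+\e)\tau+(5\|Db\|+\e)|w|+(5\|Dc\|+\e)|z|}$; choosing $\e\le\min\{\kappa_1-5\|Da\|,\,\kappa_2-5\|Db\|,\,K-5\|Dc\|\}$, which is positive by hypothesis, bounds this and all lower-order derivatives of $\Psi$ by $C(1+|x|)\,\ex^{\kappa_1\tau+\kappa_2|w|+K|z|}$, while the zeroth-order estimate follows from $|\Psi(x;\tau,w,z)|\le|x|+\|a\|\,\tau+\|b\|\,|w|+\|c\|\,|z|$. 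The Lipschitz-in-$x$ estimates for $\Psi$, $\partial_\tau\Psi$, $\partial_{\tau\iota_1}\Psi$ and $\partial_{\iota_1\iota_2}\Psi$ then follow from the mean value theorem in $x$ applied to the bounds for $\partial_{x^i}$ of these quantities, which are derivatives of orders $1,2,3$ and $3$ — hence at most three, and so covered by the same choice of $\e$.

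The step I expect to be the real work is the induction of the previous paragraph, and specifically pinning down the observation that differentiating the explicit affine dependence of $g$ on $(\tau,w,z)$ produces only the innocuous polynomial prefactors $\tau,|w|,|z|$ — which can be swept into an arbitrarily small exponential rate — rather than genuine factors of $\|Da\|,\|Db\|,\|Dc\|$. It is exactly this bookkeeping that makes a $k$-th order derivative pick up the exponent $(2k-1)\mu$, so that third derivatives cost precisely $5\mu$ and the hypotheses $\kappa_1>5\|Da\|$, $\kappa_2>5\|Db\|$, $K>5\|Dc\|$ are just what is needed; a less careful count would give a larger constant than $5$.
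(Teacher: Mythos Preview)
Your proposal is correct and follows essentially the same approach as the paper: differentiate the defining ODE for $\psi$, observe that all variational equations share the homogeneous part $M(u)=D_yg(\psi(u))$ with $\|M(u)\|\le\mu$, and apply Gr\"onwall to obtain the exponent pattern $(2k-1)\mu$ for $k$-th order derivatives. The paper carries out the computations explicitly for orders $0,1,2,3$ (keeping the polynomial prefactors $(\tau+|w|+|z|)^j$ rather than absorbing them into an $\e$-exponential as you do), while you package the same argument as a clean induction; your use of the mean value theorem in $x$ to obtain the Lipschitz estimates is a natural way to fill in what the paper leaves to ``analogously''.
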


The approximations $\bar X^h$ have the following properties.
\begin{thm}
\label{t:existenceWZ}
Assume that conditions \emph{\textbf{H$_{a}^{(3)}$}}, \emph{\textbf{H$_{b}^{(3)}$}}
\emph{\textbf{H$_{c}^{(3)}$}}, \emph{\textbf{H$_{\Psi,K}$}} and \emph{\textbf{H$_{\nu,pK+}$}}
hold true for some $K\in[0,\infty)$ and $p\in[2,\infty)$.
Then for any $T\in[0,\infty)$ there is a constant $C\in(0,\infty)$ such that for
any $x,y\in\bR^d$ and any $h\in(0,1]$
the approximation scheme $\{\bar X^h_t\}_{0\leq t\leq T}$ satisfies
\begin{align}
\label{e:sup x bar}
\E\sup_{0\leq t\leq T} |\bar{X}^h_t(x) |^p &\leq C(1+|x|^p),\\
\label{e:supXxy}
\E  \sup_{0\leq t\leq T} |\bar{X}^h_t(x)-\bar{X}^h_t(y) |^p &\leq C|x-y|^p.
\end{align}
\end{thm}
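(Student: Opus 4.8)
The plan is to transfer to the scheme $\bar X^h$ the argument that produces the a priori bounds \eqref{e:sup x}: an It\^o expansion on each mesh interval, the Burkholder--Davis--Gundy and Kunita martingale inequalities, and Gronwall's lemma. Write $\eta(s):=h\lfloor s/h\rfloor$ and, on $(kh,(k+1)h]$, use $\bar X^h_t=\Psi(\bar X^h_{kh};t-kh,W_t-W_{kh},Z_t-Z_{kh})$ together with the $C^3$-smoothness of $\Psi$ in $(\tau,w,z)$ furnished by \textbf{H$_{\Psi,K}$} to apply It\^o's formula to $t\mapsto\Psi(\bar X^h_{\eta(t)};t-\eta(t),W_t-W_{\eta(t)},Z_t-Z_{\eta(t)})$ on each interval and glue the pieces. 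Abbreviating $\sigma_s:=(s-\eta(s),W_s-W_{\eta(s)},Z_{s-}-Z_{\eta(s)})$, this writes $\bar X^h$ as $x$, plus an absolutely continuous part with integrand $\partial_\tau\Psi(\bar X^h_{\eta(s)};\sigma_s)+\tfrac12\sum_j\partial_{w^jw^j}\Psi(\bar X^h_{\eta(s)};\sigma_s)$ and the $\nu$-compensator of the jump term, plus a continuous $W$-martingale with integrands $\partial_{w^j}\Psi(\bar X^h_{\eta(s)};\sigma_s)$, plus a compensated-Poisson martingale with integrand $\Psi(\bar X^h_{\eta(s)};\cdot\,,\cdot\,,Z_{s-}-Z_{\eta(s)}+z)-\Psi(\bar X^h_{\eta(s)};\cdot\,,\cdot\,,Z_{s-}-Z_{\eta(s)})$.

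\textbf{Two ingredients.} (i) By \textbf{H$_{\Psi,K}$} each of these integrands is bounded by $C(1+|\bar X^h_{\eta(s)}|)\,\Xi_{\eta(s)}$, where $\Xi_{kh}:=\sup_{kh\le u\le(k+1)h}\exp\!\big(\kappa_1(u-kh)+\kappa_2|W_u-W_{kh}|+K|Z_u-Z_{kh}|\big)$; for the two jump integrands one first applies a mean-value estimate in the $z$-variable to extract a factor $|z|e^{K|z|}$, respectively $|z|^2e^{K|z|}$, which is $\nu$-integrable under \textbf{H$_{\nu,pK+}$} since $p\ge2$ (so $2K\le pK$). (ii) $\Xi_{kh}$ depends only on the increments of $W$ and $Z$ over $[kh,(k+1)h]$, hence is independent of $\mathcal F_{kh}$ and factors out of expectations against the $\mathcal F_{\eta(s)}$-measurable $\bar X^h_{\eta(s)}$; moreover $\sup_{h\in(0,1]}\E\,\Xi_{kh}^{p}<\infty$ by the Gaussian tail bound and the Remark following \textbf{H$_{\nu,pK+}$}, while $\E\big[(h+\sup_{[kh,(k+1)h]}|W-W_{kh}|+\sup_{[kh,(k+1)h]}|Z-Z_{kh}|)^{p}\,\Xi_{kh}^{p}\big]\le Ch$ (expand the binomial and use that $Z$ is square integrable).

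\textbf{Moment bound \eqref{e:sup x bar}.} Bound $\sup_{0\le u\le t}|\bar X^h_u|^p$ by a constant times the $p$-th powers of $|x|$, of the drift, and of the two martingales; estimate the martingales by BDG and Kunita (legitimate since $p\ge2$), the drift by H\"older, and then use Fubini and the independence in (ii) to reduce everything to $\int_0^t\E\sup_{0\le u\le s}(1+|\bar X^h_u|)^p\,\di s$ times constants uniform in $h$. This yields $f(t):=\E\sup_{0\le u\le t}|\bar X^h_u|^p\le C(1+|x|^p)+C\int_0^t(1+f(s))\,\di s$. For fixed $h$ there are finitely many mesh intervals, each contributing a finite exponential moment, so $f(T)<\infty$ a priori and Gronwall gives $f(T)\le C(1+|x|^p)$ with $C$ independent of $h$. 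It is essential that the mesh contributions enter through a time integral: a naive interval-by-interval multiplicative recursion would multiply of order $T/h$ factors $e^{\kappa_2|W_{(k+1)h}-W_{kh}|}$ and diverge as $h\to0$.

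\textbf{Difference bound \eqref{e:supXxy}.} Repeat the scheme for $Y^h_t:=\bar X^h_t(x)-\bar X^h_t(y)$; its It\^o expansion has the same shape with each coefficient replaced by the difference of its values at $\bar X^h_{\eta(s)}(x)$ and $\bar X^h_{\eta(s)}(y)$. For $\Psi,\partial_\tau\Psi,\partial_{\tau w^j}\Psi,\partial_{w^jz^l}\Psi$ the Lipschitz part of \textbf{H$_{\Psi,K}$} gives directly a bound $C|Y^h_{\eta(s)}|\,\Xi_{\eta(s)}$. The delicate coefficients are the $W$-martingale coefficients $\partial_{w^j}\Psi$ and the It\^o-correction coefficients $\partial_{w^jw^j}\Psi$, whose spatial Lipschitz bounds are not among those listed in \textbf{H$_{\Psi,K}$}: one expands in $\sigma$ about the origin, where the base-point values $\partial_{w^j}\Psi(\cdot;0,0,0)$ and $\partial_{w^jw^j}\Psi(\cdot;0,0,0)$ are built from $b$ and $Db\cdot b$ and are Lipschitz precisely because of the product condition $\|b^i_j\,\partial^\alpha b^k_l\|<\infty$ in \textbf{H$_b^{(3)}$}, and controls the $\sigma$-remainders — of order $|\sigma|$, hence harmless after the time integral — using the $C^3$ regularity from \textbf{H$_a^{(3)}$}--\textbf{H$_c^{(3)}$} together with the listed Lipschitz bounds. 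One thereby gets integrand differences $\le C|Y^h_{\eta(s)}|\,\Xi_{\eta(s)}$ up to terms vanishing with $h$, and the same BDG/Kunita/Fubini/independence manipulation produces $g(t):=\E\sup_{0\le u\le t}|Y^h_u|^p\le|x-y|^p+C\int_0^t g(s)\,\di s$, whence $g(T)\le C|x-y|^p$ with $C$ independent of $h$. The main obstacle is exactly this coefficient bookkeeping for $Y^h$: unlike for the pure moment bound, one must establish spatial Lipschitz continuity of the second $w$-derivatives of $\Psi$, with a constant of the form $Ce^{\kappa_1\tau+\kappa_2|w|+K|z|}$, for which the product conditions in \textbf{H$_b^{(3)}$} and \textbf{H$_c^{(3)}$} are indispensable.
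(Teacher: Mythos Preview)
For the moment bound \eqref{e:sup x bar} your argument coincides with the paper's: write $\bar X^h$ as an It\^o SDE by applying It\^o's formula on each mesh interval (this is exactly the paper's representation \eqref{e:SDEbarX}), apply a Kunita/Novikov maximal inequality, use the growth part of $\mathbf{H}_{\Psi,K}$ together with the independence of $(s-\eta(s),\Delta W^h_s,\Delta Z^h_s)$ from $\bar X^h_{\eta(s)}$ to factor the expectations, and close by Gronwall. The paper disposes of \eqref{e:supXxy} in one line (``analogously with the help of assumption $\mathbf{H}_{\Psi,K}$ for the differences''), so you are in fact more explicit than the paper on that part.

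There is, however, a gap in your treatment of \eqref{e:supXxy}. You correctly notice that the Lipschitz clauses of $\mathbf{H}_{\Psi,K}$ omit $\partial_{w^j}\Psi$ and $\partial_{w^jw^k}\Psi$ (and, for the jump integrands, $\partial_{z^j}\Psi$). But your fix --- expand around $\sigma=0$, use that $b$ and $Db\cdot b$ are Lipschitz at the base point, and declare the $\sigma$-remainder ``of order $|\sigma|$, hence harmless after the time integral'' --- does not close the Gronwall. To arrive at $g(t)\le|x-y|^p+C\int_0^t g(s)\,\di s$ every integrand difference must carry the factor $|Y^h_{\eta(s)}|$; a remainder that is merely $O(|\sigma|)\,(1+|\bar X^h_{\eta(s)}(x)|+|\bar X^h_{\eta(s)}(y)|)\,\Xi_{\eta(s)}$ produces an additive $Ch(1+|x|^p+|y|^p)$ after integration and ruins the conclusion. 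Iterating the expansion only pushes the problem to third-order differences, for which $\mathbf{H}_{\Psi,K}$ lists no Lipschitz bounds either. The correct repair --- and what the paper's ``analogously'' tacitly presumes --- is to derive the missing Lipschitz estimates
\[
|\partial_{w}\Psi(x;\cdot)-\partial_{w}\Psi(y;\cdot)|,\ |\partial_{z}\Psi(x;\cdot)-\partial_{z}\Psi(y;\cdot)|,\ |\partial_{ww}\Psi(x;\cdot)-\partial_{ww}\Psi(y;\cdot)|\ \le\ C|x-y|\,\ex^{\kappa_1\tau+\kappa_2|w|+K|z|}
\]
directly from Gronwall applied to the variational equations for $\psi_{xw}$, $\psi_{xz}$, $\psi_{xww}$, exactly as in Lemma~\ref{l:psi}(3) and the proof of Lemma~\ref{l:bounded}; the boundedness hypotheses in $\mathbf{H}_a^{(3)}$, $\mathbf{H}_b^{(3)}$, $\mathbf{H}_c^{(3)}$ (in particular the product conditions you single out) supply precisely the coefficient bounds needed in those linear ODEs. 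With these Lipschitz bounds in hand the difference estimate goes through verbatim as the moment bound.
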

The next Theorem contains the first main result on the strong approximation of solutions $X$ of the Marcus SDE \eqref{e:SDEM}.
\begin{thm}
\label{thm:Xx-Xhx}
Assume that assumptions \emph{\textbf{H$_{a}^{(3)}$}}, \emph{\textbf{H$_{b}^{(3)}$}},
\emph{\textbf{H$_{c}^{(3)}$}}, \emph{\textbf{H$_{\Psi,K}$}}, \emph{\textbf{H$_{\nu,pK+}$}}
and \emph{\textbf{H$_{\nu,p\|Dc\|+}$}}
hold true for some $K\in[0,\infty)$ and $p\in[2,\infty)$.
Then for any $T\in[0,\infty)$ there is a constant $C\in(0,\infty)$ such that for any
$h\in(0,1]$
$$
\E\sup_{0\leq t\leq T}|X_t(x)-\bar{X}^h_t(x)|^p\leq Ch(1+|x|^p),\quad x\in\bR^d.
$$
\end{thm}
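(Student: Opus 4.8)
The plan is to estimate the one-step error of the scheme and then propagate it by a discrete Gronwall argument, working with the It\^o form \eqref{e:SDEI} of the Marcus equation. First I would introduce the auxiliary process $\widetilde X^h$ obtained by running the scheme with the \emph{true} solution as the value at each knot: that is, on $(kh,(k+1)h]$ set $\widetilde X^h_t = \Psi(X_{kh}; t-kh, W_t-W_{kh}, Z_t-Z_{kh})$. The global error then splits as $X_t - \bar X^h_t = (X_t - \widetilde X^h_t) + (\widetilde X^h_t - \bar X^h_t)$. For the second term one uses the Lipschitz-in-$x$ bounds on $\Psi$ from \textbf{H$_{\Psi,K}$} (together with the exponential moments of $W$ and $Z$ supplied by \textbf{H$_{\nu,pK+}$}) to get $\E \sup_{s\le t}|\widetilde X^h_s - \bar X^h_s|^p \le_C \E |X_{kh} - \bar X^h_{kh}|^p$ on each block, which feeds the Gronwall recursion; the nontrivial work is the first term, the local consistency error.

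For the local error I would Taylor-expand $\Psi(X_{kh};\tau,W_t-W_{kh},Z_t-Z_{kh})$ in the variables $\tau$ and $w$ around $(0,0)$ (keeping the $z$-dependence exact, since it is the $z$-ODE that reproduces the Marcus jump). Using $\Psi(x;0,0,z)=\varphi^z(x)$ and the identities $\partial_\tau\Psi(x;0,0,z)|_{\cdot}$, $\partial_{w^j}\Psi$, $\partial_{w^jw^l}\Psi$ expressed through $a$, $b$, $Db\,b$ — these are exactly the drift, diffusion and Stratonovich-correction coefficients appearing in \eqref{e:SDEI} — one matches, over the interval $[kh,(k+1)h]$, the increment of $\widetilde X^h$ against the increment of $X$ written from \eqref{e:SDEI}. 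The discrepancy consists of: (i) second- and third-order Taylor remainders in $\tau$, $w$, controlled by the third-derivative bounds in \textbf{H$_{\Psi,K}$}, contributing $O(h^2)$ per step in $\tau$ and, after taking $\sup$ and using $\E|W_{(k+1)h}-W_{kh}|^{2p}\le_C h^p$, the martingale/Burkholder term of order $h$ per step; (ii) the difference between $\int_{kh}^{(k+1)h}(\text{coefficient})(X_s)\,\di s$ and $h\cdot(\text{coefficient})(X_{kh})$, handled by the $p$-th moment modulus of continuity of $X$ from \eqref{e:sup x} in Theorem \ref{t:existenceSDE}; (iii) the jump terms, where the key point is that $\varphi^{\Delta Z}(X_{kh}) - X_{kh}$ versus $\varphi^{\Delta Z}(X_{\tau-}) - X_{\tau-}$ differ only through $|X_{\tau-} - X_{kh}|$, again an $O(h)$ moment contribution, and the compensator integrals over $[kh,(k+1)h]$ are $O(h)$ directly. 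Summing $\lfloor T/h\rfloor$ blocks and applying Burkholder--Davis--Gundy together with \textbf{H$_{\nu,p\|Dc\|+}$} (for moments of $X$) and \textbf{H$_{\nu,pK+}$} (for moments of $\bar X^h$) yields $\E\sup_{s\le t}|X_s-\widetilde X^h_s|^p \le_C h(1+|x|^p) + C\sum_{kh\le t}\E|X_{kh}-\bar X^h_{kh}|^p \cdot h$.

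Combining the two pieces gives, with $e_k := \E\sup_{0\le t\le kh}|X_t - \bar X^h_t|^p$, a recursion of the form $e_{k+1} \le_C h(1+|x|^p) + C h \sum_{j\le k} e_j$, and the discrete Gronwall lemma produces $e_{\lfloor T/h\rfloor} \le C h(1+|x|^p)$ with $C$ depending on $T$ but not on $h$ or $x$, which is the claim. The main obstacle I anticipate is bookkeeping in step (i)–(iii): one must be careful that the Taylor expansion of $\Psi$ in $(\tau,w)$ genuinely reproduces the It\^o--Stratonovich drift $\tfrac12\operatorname{tr}(Db\,b)$ at second order in $w$ (this is where \textbf{H$_{b}^{(3)}$} and the mixed bounds $\|b^i_j\partial^\alpha b^k_l\|<\infty$ enter, and it parallels the classical Wong--Zakai computation), and that the jump part contributes only at order $h$ and not $h^{1/2}$ — the latter relies on the fact that $Z^h$ and $Z$ share jump times only in the limit, so on a fixed mesh one compares the \emph{exact} flow $\varphi^{\Delta Z}$ on both sides and the mismatch is purely in the base point, of size $|X_{\tau-}-X_{kh}| = O(h^{1/2})$ in $L^p$, whose $p$-th power summed over $O(h^{-1})$ steps is $O(h^{p/2-1})\cdot$const — wait, this must instead be organized as a martingale increment so that BDG gives the correct $O(h)$ after the $p$-th power; getting this grouping right is the delicate point.
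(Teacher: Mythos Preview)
Your high-level plan --- auxiliary process $\widetilde X^h$, one-step consistency error, discrete Gronwall --- is a reasonable template, but the paper proceeds differently and your treatment of the jump part has a genuine gap.

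The paper does not introduce $\widetilde X^h$ and does not work step by step. Instead it applies It\^o's formula to $\bar X^h_t = \Psi(X^h_{n_t^h h}; t^h, \Delta W^h_t, \Delta Z^h_t)$ on each interval $(kh,(k+1)h]$ to obtain a single global semimartingale representation \eqref{e:SDEbarX}: $\bar X^h$ is $x$ plus Lebesgue/It\^o integrals of $\Psi_\tau$, $\Psi_w$, $\operatorname{tr}\Psi_{ww}$, a jump integral of $\Psi(\ldots,\Delta Z^h_{s-}+z)-\Psi(\ldots,\Delta Z^h_{s-})$ against $\widetilde N$, and its compensator. Then $X-\bar X^h$ is a difference of two It\^o integrals of the same structure; each integrand difference is split as $F=F_1+F_2+F_3$ (value at $X_s$ minus at $\bar X^h_s$; at $\bar X^h_s$ minus at $X^h_{n_sh}$; at $X^h_{n_sh}$ minus the relevant $\Psi$-derivative). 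The $F_1$ terms feed a \emph{continuous-time} Gronwall, while $F_2$ (via Lemma~\ref{l:psi}) and $F_3$ (via Taylor at the origin) give the $O(h)$ inhomogeneity. This avoids any summation over steps.

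Your specific plan for the local error --- expand $\Psi$ in $(\tau,w)$ at $(0,0)$ while keeping $z$ fixed --- does not work as written, for two reasons. First, the identities you invoke, $\partial_\tau\Psi=a$, $\partial_w\Psi=b$, $\operatorname{tr}\Psi_{ww}=\langle b',b\rangle$, hold only at $(\tau,w,z)=(0,0,0)$; at $(0,0,z)$ with $z\neq 0$ these derivatives are transported along the $c$-flow and are not the coefficients of \eqref{e:SDEI}. The paper's step~3 uses exactly $\Psi_\tau(x,0,0,0)=a(x)$ etc., which is why it Taylor-expands in all three variables at the origin. Second, your zeroth-order term is $\varphi^{Z_t-Z_{kh}}(X_{kh})$, the flow of the \emph{total} increment, whereas the Marcus SDE applies $\varphi^{\Delta Z_\tau}$ at each jump time separately and integrates $\varphi^z-\mathrm{Id}$ against $\widetilde N$; for nonlinear $c$ these differ already for two jumps by a term of order $|z_1||z_2|$, and with infinite activity the ``mismatch is purely in the base point'' picture is incorrect. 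You noticed the related difficulty yourself (``getting this grouping right is the delicate point'') but did not resolve it. The paper's It\^o-formula route sidesteps it entirely: the jump integrand of $\bar X^h$ against $\widetilde N$ is $\Psi(\ldots,\Delta Z^h_{s-}+\zeta)-\Psi(\ldots,\Delta Z^h_{s-})$, which is compared \emph{jump by jump} to $\varphi^\zeta(X^h_{n_sh})-X^h_{n_sh}=\Psi(X^h_{n_sh},0,0,\zeta)-\Psi(X^h_{n_sh},0,0,0)$, and the difference is a mixed second-order Taylor remainder of size $(1+|x|)(|s^h|+|\Delta W^h_s|+|\Delta Z^h_{s-}|)\,|\zeta|\,\ex^{\cdots}$ handled by Novikov's maximal inequality (step~3d).
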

The second main result of this paper is the
strong locally uniform approximation by the Wong--Zakai scheme \eqref{e:Euler} w.r.t.\ the initial point $x\in\bR^d$.
\begin{thm}
\label{t:main}
Assume that assumptions \emph{\textbf{H$_{a}^{(3)}$}}, \emph{\textbf{H$_{b}^{(3)}$}},
\emph{\textbf{H$_{c}^{(3)}$}}, \emph{\textbf{H$_{\Psi,K}$}}, \emph{\textbf{H$_{\nu,2d\|Dc\|+}$}} and \emph{\textbf{H$_{\nu,2dK+}$}}
hold true.
Then for any $T\in[0,\infty)$, $N\in(0,\infty)$ and $\e\in (0,1)$ there is $C\in(0,\infty)$ such that
for any $h\in(0,1]$
$$
\E \sup_{|x|\leq N}  \sup_{0\leq t\leq T}    |X_t(x)-\bar{X}^h_t(x)|\leq C h^{\frac{1-\e}{4d}}.
$$
\end{thm}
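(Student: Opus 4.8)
The plan is to upgrade the pointwise-in-$x$ estimate of Theorem \ref{thm:Xx-Xhx} to a locally uniform one by a chaining/Kolmogorov-continuity argument over the spatial variable $x$. The idea is to discretize the ball $\{|x|\le N\}$ on a fine grid of mesh $\delta$, control the error at the grid points by the pointwise bound, and control the oscillation of both $x\mapsto X_t(x)$ and $x\mapsto\bar X^h_t(x)$ between grid points using the Lipschitz-in-$x$ moment bounds \eqref{e:sup x} and \eqref{e:sup x bar}–\eqref{e:supXxy} together with a Kolmogorov-type continuity theorem for the random fields $x\mapsto\sup_{t\le T}|X_t(x)-\bar X^h_t(x)|$.

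First I would fix $p\ge 2$ (to be chosen large, depending on $\e$ and $d$) and invoke Theorem \ref{thm:Xx-Xhx} to get, for each $x$,
$$
\E\sup_{0\le t\le T}|X_t(x)-\bar X^h_t(x)|^p\le C_p\,h\,(1+|x|^p).
$$
(The exponential-integrability hypotheses \textbf{H$_{\nu,2d\|Dc\|+}$} and \textbf{H$_{\nu,2dK+}$} of Theorem \ref{t:main} imply \textbf{H$_{\nu,pK+}$} and \textbf{H$_{\nu,p\|Dc\|+}$} precisely when $p\le 2d$, so one takes $p=2d$, or $p$ slightly below $2d$, which is where the exponent $4d$ in the statement will come from.) Next, from \eqref{e:sup x} and \eqref{e:supXxy} I would record that the random field $R^h(x):=\sup_{0\le t\le T}|X_t(x)-\bar X^h_t(x)|$ satisfies
$$
\E|R^h(x)-R^h(y)|^p\le 2^{p-1}\Big(\E\sup_t|X_t(x)-X_t(y)|^p+\E\sup_t|\bar X^h_t(x)-\bar X^h_t(y)|^p\Big)\le C_p|x-y|^p,
$$
with a constant independent of $h$. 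Thus $R^h$ is a field on $\bR^d$ with a first-moment bound of order $h^{1/p}$ at each point and an increment bound of Kolmogorov type with exponent $p$; since $p$ can be taken larger than $d$, the Kolmogorov–Chentsov theorem applies.

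The main step is then a quantitative Kolmogorov/Garsia–Rodemich–Rumsey estimate that tracks the dependence on $h$. Concretely, I would either (a) use the dyadic-chaining proof of Kolmogorov's theorem, bounding $\E\sup_{|x|\le N}R^h(x)$ by $\E R^h(x_0)$ plus a geometric sum $\sum_k 2^{-k\gamma}\big(\sum_{\text{edges at level }k}\E|R^h-R^h|^p\big)^{1/p}$ with $\gamma<1-d/p$; or (b) do it by hand: cover $\{|x|\le N\}$ by $\asymp(N/\delta)^d$ balls of radius $\delta$, write $\sup_{|x|\le N}R^h(x)\le \max_i R^h(x_i)+\sup_i\sup_{|x-x_i|\le\delta}|R^h(x)-R^h(x_i)|$, bound the max by $\big(\sum_i\E R^h(x_i)^p\big)^{1/p}\lesssim (N/\delta)^{d/p}h^{1/p}$, and bound the oscillation term via a further application of Kolmogorov on each small ball (the oscillation contributes $\lesssim\delta^{1-d/p}$ up to logs), then optimize over $\delta$. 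Balancing $(N/\delta)^{d/p}h^{1/p}$ against $\delta^{1-d/p}$ gives $\delta\asymp h^{1/(2\cdot\text{something})}$ and a final rate $h^{\frac{1}{p}\cdot\frac{p-d}{2p-d}}$ or similar; taking $p$ close to $2d$ and simplifying yields the exponent $\frac{1-\e}{4d}$, with the $\e$ absorbing the logarithmic factors and the loss from $p<2d$ rather than $p=2d$.

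The hard part will be making the Kolmogorov-continuity estimate genuinely quantitative and $h$-uniform, i.e., extracting the explicit power of $h$ rather than just qualitative continuity, and then carrying out the optimization over the grid mesh so that the exponents combine to exactly $\frac{1-\e}{4d}$; in particular one must be careful that the constant in the increment bound for $R^h$ does not degrade as $h\to 0$ (which is guaranteed by \eqref{e:supXxy} and \eqref{e:sup x} being $h$-independent), and that the exponential-moment assumptions are invoked with the sharp exponent $p=2d$ so that no room is lost there. A minor additional point is that Kolmogorov's theorem only controls a continuous modification; here $x\mapsto X_t(x)$ and $x\mapsto\bar X^h_t(x)$ are already known (from Theorems \ref{t:existenceSDE} and \ref{t:existenceWZ}, via \eqref{e:sup x} and \eqref{e:supXxy}) to admit continuous versions, so $R^h$ is continuous in $x$ and the supremum over $|x|\le N$ is a genuine (measurable) supremum.
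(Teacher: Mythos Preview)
Your proposal is correct and follows essentially the same route as the paper: define the field $U^h(x)=\sup_{t\le T}|X_t(x)-\bar X^h_t(x)|$, feed in the pointwise bound from Theorem~\ref{thm:Xx-Xhx} together with the $x$-Lipschitz moment bounds \eqref{e:sup x} and \eqref{e:supXxy}, and run a Kolmogorov-type argument with $p=2d$. The one place where the paper is slicker is exactly what you flag as ``the hard part'': instead of introducing a mesh $\delta$ and optimizing, the paper interpolates the two increment estimates
\[
(\E|U^h(x)-U^h(y)|^p)^{1/p}\le_C h^{1/p}\quad\text{and}\quad (\E|U^h(x)-U^h(y)|^p)^{1/p}\le_C |x-y|
\]
via the elementary bound $\min(h^{1/p},|x-y|)\le h^{q/p}|x-y|^{1-q}$ for any $q\in(0,1)$. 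This places the $h$-dependence directly in the H\"older seminorm, so a single application of the Kolmogorov--Kunita embedding (Theorem~4.4 in \cite{Kunita-04}) with $p(1-q)>d$ gives $(\E\sup_{|x|\le N}|U^h(x)|^p)^{1/p}\le_C h^{q/p}$; taking $p=2d$ and $q=(1-\e)/2$ yields the exponent $\frac{1-\e}{4d}$ without a separate optimization step. Your grid-and-balance approach (b) reaches the same exponent after optimizing $\delta$, and your chaining approach (a) is effectively what the Kunita theorem packages.
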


\smallskip

As we see, our results give the convergence rate of order $\frac12$,
provided that the coefficients are $C^3$.
Clearly, all the results formulated above hold true for solutions $X^\circ$
of continuous Stratonovich SDEs with $c\equiv 0$. Hence,
Theorem \ref{thm:Xx-Xhx}
complements the findings of Theorems \ref{t:11}--\ref{t:14}.
The estimates for the uniform convergence given in
Theorem \ref{t:main} is novel and refine the convergence of Theorem \ref{t:11}.

Eventually we note that the numerical scheme $X^h$ has the weak convergence of order 1, as it was shown by
Kosenkova \emph{et al.} in \cite{KoKuPa-19}.
\begin{thm}[Theorem 2.4 in \cite{KoKuPa-19}]
 Let assumptions \emph{\textbf{H$_{a}^{(4)}$}} and \emph{\textbf{H$_{c}^{(4)}$}}, \emph{\textbf{H$_{c}^{(4)}$}}
 and \emph{\textbf{H$_{\nu,8\|Dc\|+}$}}  hold true. Then for any $f\in C_b^4(\bR^d,\bR)$ and
any $T\in[0,\infty)$ there is a constant $C\in (0,\infty)$ such that that
for any $h\in(0,1]$
$$
\max_{k\in\mathbb N_0\colon kh\leq T} |\E f(X^h_{kh}) - \E f(X_{kh})|\leq C h(1+|x|^4),\quad  x\in\mathbb R^d .
$$
 \end{thm}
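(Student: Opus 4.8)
The plan is to prove the weak estimate by the backward Kolmogorov (semigroup) method, exploiting that the Wong--Zakai one--step map $\Psi$ of \eqref{e:eqpsi} is tuned to reproduce, to first order in the step $h$, the Marcus generator
\ban
\cL g(x)&=\langle a(x)+\tfrac12\operatorname{tr}(Db(x)b(x)),\nabla g(x)\rangle+\tfrac12\operatorname{tr}(b(x)^{\top}D^2 g(x)\,b(x))\\
&\quad+\int_{|z|>0}\big[g(\varphi^z(x))-g(x)-\langle c(x)z,\nabla g(x)\rangle\big]\,\nu(\di z).
\ean
Fix $T=Nh$ and set $u(s,x):=\E f(X_{T-s}(x))$, so that $u(T,\cdot)=f$, $u(0,\cdot)=\E f(X_T(\cdot))$, and the Markov semigroup identity reads $u(kh,\cdot)=P_h u((k+1)h,\cdot)$, where $P_h g(x)=\E g(X_h(x))$. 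Writing $Q_h g(x):=\E g(\Psi(x;h,W_h,Z_h))$ for the one--step transition of the scheme, the Markov property of the chain $(X^h_{kh})$ together with the tower identity yields the telescoping representation
$$
\E f(X^h_{Nh})-\E f(X_{Nh})=\sum_{k=0}^{N-1}\E\big[(Q_h-P_h)u((k+1)h,\cdot)(X^h_{kh})\big].
$$
Everything then reduces to a one--step weak error bound $|(Q_h-P_h)g(x)|\leq_C h^2(1+|x|^4)$, valid uniformly for $g$ in the family $\{u(s,\cdot)\}_{s\in[0,T]}$; combined with the fourth--moment bound $\E|X^h_{kh}|^4\leq_C 1+|x|^4$ from Theorem~\ref{t:existenceWZ}, summing the $N\sim T/h$ terms gives the claimed order $h$.

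The first input is regularity of $u$. First I would differentiate the exact flow $x\mapsto X_t(x)$ up to order four in the initial datum: under \textbf{H$_a^{(4)}$}, \textbf{H$_b^{(4)}$}, \textbf{H$_c^{(4)}$} the variational equations have unique solutions whose $L^p$--norms are controlled via Gronwall and Theorem~\ref{t:existenceSDE}, and since $f\in C_b^4$ this gives $x$--uniform bounds on $\nabla^j u(s,\cdot)$ for $1\le j\le 4$, uniformly in $s\in[0,T]$. The delicate term is the jump part, where the $k$--th order spatial derivatives of the Marcus flow $x\mapsto\varphi^z(x)$ grow like $\mathrm{poly}(|z|)\,\ex^{k\|Dc\||z|}$, by Gronwall applied to \eqref{e:ophi-eq} and its variational equations. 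Note that $\cL g$ and $\cL^2 g$ do \emph{not} stay bounded: because $a,b,c$ grow at most linearly, $\cL^2 g$ has polynomial growth of degree at most four in $|x|$, which is exactly the growth absorbed by $\E|X^h_{kh}|^4\leq_C 1+|x|^4$ and which dictates the factor $(1+|x|^4)$ in the statement.

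The heart of the argument is the one--step expansion. For $P_h$ I would use Dynkin's formula $P_h g=g+h\cL g+\int_0^h(P_s-I)\cL g\,\di s$ and bound the remainder pointwise by $\int_0^h s\,|\cL^2 g(x)|\,\di s=O(h^2)(1+|x|^4)$, finite because $g\in C^4$ and, in the jump part of $\cL^2 g$, the compounded flow derivatives are $\nu$--integrable under \textbf{H$_{\nu,8\|Dc\|+}$}. For $Q_h$ I would Taylor--expand $g(\Psi(x;h,W_h,Z_h))$ and take expectations, using independence of $W$ and $Z$. The matching of the $O(h)$ terms is where the Marcus structure enters: differentiating \eqref{e:eqpsi} gives $\partial_\tau\Psi|_{0}=a(x)$, $\partial_{w^j}\Psi|_{0}=b_{\cdot j}(x)$, $\partial_{z^j}\Psi|_{0}=c_{\cdot j}(x)$, and $\Psi(x;0,0,z)=\varphi^z(x)$. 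Since $\E W_h=0$ and $\E[W_h W_h^{\top}]=hI$, the second--order Gaussian term reproduces simultaneously the It\^o diffusion $\tfrac12\operatorname{tr}(b^{\top}D^2 g\,b)$ and the Stratonovich correction $\tfrac12\langle\operatorname{tr}(Db\,b),\nabla g\rangle$, the latter precisely because the second variation of the scheme ODE satisfies $\sum_j\partial_{w^jw^j}\Psi|_{0}=\operatorname{tr}(Db(x)b(x))$. The L\'evy contribution, after conditioning on the Poisson structure over $[0,h]$ and using $\Psi(x;0,0,z)=\varphi^z(x)$, reproduces the compensated jump integral. Together these give $Q_h g=g+h\cL g+O(h^2)$, so the first--order terms cancel in $Q_h-P_h$.

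The main obstacle I anticipate is the jump part of the one--step bound: unlike the Brownian increment, $Z_h$ need not be small, so the expansion cannot be purely local in $z$. I would split $Z_h$ into its compensated small--jump part and the large jumps, treat the latter by conditioning (each large jump contributing $\varphi^z(x)$ up to $O(h)$ and $O(|W_h|)$ corrections coming from the drift and diffusion acting during the step), and recover the compensator $-\langle c(x)z,\nabla g\rangle$ from the combination of the small jumps with the drift of the compensated measure. Keeping the remainder at order $h^2$ is what forces the weight $\ex^{8\|Dc\||z|}$: the $O(h^2)$ terms carry up to four flow derivatives of $g\circ\varphi^z$ coming from two applications of $\cL$, each derivative contributing a factor $\ex^{\|Dc\||z|}$, and \textbf{H$_{\nu,8\|Dc\|+}$} together with the moment bound $\E\sup_{t\leq T}|Z_t|^p\ex^{8\|Dc\||Z_t|}<\infty$ render the corresponding $\nu$--integrals finite with the stated polynomial dependence on $|x|$. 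Summing the $N\sim T/h$ one--step errors then completes the proof.
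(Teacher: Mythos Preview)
The paper does not prove this statement: it is quoted verbatim as Theorem~2.4 of \cite{KoKuPa-19} and included only to contextualize the strong-convergence results, so there is no in-paper proof to compare against. Your outline is the standard Talay--Tubaro semigroup argument, which is indeed the method used in \cite{KoKuPa-19}; the telescoping identity, the reduction to a one-step bound $|(Q_h-P_h)g(x)|\leq_C h^2(1+|x|^4)$, and the identification of the first-order terms via $\partial_\tau\Psi|_0=a$, $\partial_w\Psi|_0=b$, $\Psi(x;0,0,z)=\varphi^z(x)$ are all correct.

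Two places where your sketch would need more care to become a proof. First, the $C^4$-regularity of $u(s,\cdot)=P_{T-s}f$ with polynomially growing derivatives is not automatic from Theorem~\ref{t:existenceSDE}: you need $L^p$-bounds on the first four spatial derivatives of the stochastic flow $x\mapsto X_t(x)$, which requires a separate argument (variational SDEs for $\partial^\alpha_x X_t$, $1\le|\alpha|\le 4$, and Gronwall with the exponential tail condition on $\nu$). Second, your accounting for the exponent $8\|Dc\|$ is heuristic; in the actual estimate the worst term comes from the second iterate of the jump part of $\cL$ acting on $g\circ\varphi^{z_1}\circ\varphi^{z_2}$, and one has to track carefully how many factors of $\ex^{\|Dc\||z|}$ appear when four $x$-derivatives hit two nested Marcus flows. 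The splitting of $Z_h$ into small and large jumps that you propose is workable but not the cleanest route: in \cite{KoKuPa-19} the one-step error is handled by a direct It\^o--Taylor expansion of $g(\Psi(x;h,W_h,Z_h))$ using the It\^o formula for $\bar X^h$ (as in \eqref{e:SDEbarX}), which avoids conditioning on the number of large jumps.
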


\section{Properties of the mappings $\phi^z(\cdot)$  and $\Psi(\cdot)$\label{s:flows}}

The properties of the Marcus flow
$\phi^z(\cdot)$ defined in \eqref{e:ophi-eq} and \eqref{e:op} are decisive for our analysis of convergence.

The next Lemma
contains elementary estimates of the Marcus mapping $x\mapsto \phi^z(x)$. Similar estimates can be found, e.g.,
in Lemma 3.1 in Fujiwara and Kunita \cite{FujiwaraK-99a}. They follow immediately from the Gronwall Lemma; we omit the proof.
\begin{lem}
\label{t:phi}
Let the coefficient $c$ satisfy the conditions $\emph{\textbf{H}}^{(2)}_{c}$. Then
for each $x\in\bR^d$ and $z\in\bR^m$, the following estimates hold true:
\begin{subequations}
\begin{align*}
(1)\qquad &\sup_{u\in[0,1]}|\phi^z(u;x)|
\leq_C (|x| + |z|) \ex^{\|Dc\| |z|},
\\
(2)\qquad & |\phi^z(x)-x |\leq_C  
|z|\ex^{ \|Dc\||z|}(1+|x|) ,\\
(3)\qquad & |\phi^z(x)-\phi^z(y) |\leq    \ex^{ \|Dc\||z|}|x-y| ,\\
(4)\qquad & |\phi^z(x)-x-  \phi^z(y)+y |\leq_C 
|z|\ex^{ \|Dc\||z|}|x-y| ,\\
(5)\qquad & |\phi^z(x)-x -c (x) z |\leq_C 
|z|^2\ex^{\|Dc\| |z|}(1+|x|),  \\
(6)\qquad & |\phi^z(x)-x -  c (x) z- \phi^z(y)+y + c (y) z |\leq_C 
|z|^2 \ex^{\|Dc\| |z|} |x-y| .
\end{align*}
\end{subequations}
\end{lem}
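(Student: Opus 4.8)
The plan is to fix $z\in\bR^m$, regard $v(p):=c(p)z$ as a single vector field on $\bR^d$, and reduce all six bounds to Gronwall's inequality applied to the flow $u\mapsto\phi^z(u;x)$ of $v$. First I would record three elementary consequences of \textbf{H}$^{(2)}_{c}$. Namely: (i) $\|Dv(p)\|=\|D(c(p)z)\|\le\|Dc\|\,|z|$, so $v$ is globally Lipschitz with constant $\|Dc\|\,|z|$, which already secures the global existence and uniqueness of the solution of \eqref{e:ophi-eq}; (ii) the linear growth $|v(p)|\le|c(0)z|+\|Dc\|\,|z|\,|p|\le_C|z|(1+|p|)$; and, crucially, (iii) the \emph{acceleration field} $F(p):=D(c(\cdot)z)(p)\,[c(p)z]$ is again globally Lipschitz, with $\|DF\|\le_C|z|^2$ uniformly in $p$. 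The last fact follows by differentiating $F^i(p)=\sum_{j,k,l}\partial_{x^k}c^i_j(p)\,c^k_l(p)\,z^jz^l$, which produces the two types of terms $\partial_{x^nx^k}c^i_j\cdot c^k_l$ and $\partial_{x^k}c^i_j\cdot\partial_{x^n}c^k_l$; both are bounded by \textbf{H}$^{(2)}_{c}$ (the first through the product condition $\|c^k_l\,\partial^\alpha c^i_j\|<\infty$ with $|\alpha|=2$, the second since each first derivative of $c$ is bounded).

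Estimates (1)--(5) are then routine. For (1) I would start from $|\phi^z(u;x)|\le|x|+\int_0^u(C|z|+\|Dc\|\,|z|\,|\phi^z(s;x)|)\,\di s$ using (ii), and close by Gronwall. For (2) I set $\delta(u):=|\phi^z(u;x)-x|$ and bound the integrand by $|v(x)|+\|Dc\|\,|z|\,\delta(s)\le_C|z|(1+|x|)+\|Dc\|\,|z|\,\delta(s)$; here evaluating the growth bound (ii) at the \emph{fixed} base point $x$ is exactly what keeps the right-hand side free of a spurious $|z|^2$, and Gronwall gives $\delta(u)\le_C|z|(1+|x|)u\,\ex^{\|Dc\||z|u}$. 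For (3), the difference $\phi^z(u;x)-\phi^z(u;y)$ has $u$-derivative $v(\phi^z(u;x))-v(\phi^z(u;y))$ of norm at most $\|Dc\|\,|z|\,|\phi^z(u;x)-\phi^z(u;y)|$, so Gronwall from the initial gap $|x-y|$ yields (3) with \emph{no} multiplicative constant. Finally (4) and (5) come from integrating the obvious representations $\phi^z(x)-x-\phi^z(y)+y=\int_0^1[c(\phi^z(s;x))-c(\phi^z(s;y))]z\,\di s$ and $\phi^z(x)-x-c(x)z=\int_0^1[c(\phi^z(s;x))-c(x)]z\,\di s$, inserting (3) resp. (2), and using $\ex^a-1\le a\,\ex^a$ with $a=\|Dc\|\,|z|\ge0$ to turn the factor $(\ex^{\|Dc\||z|}-1)$ into the advertised prefactor $|z|\,\ex^{\|Dc\||z|}$.

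The crux is (6). I would write $R(u;x):=\phi^z(u;x)-x-c(x)z\,u$, so that the left-hand side of (6) equals $|S(1)|$ with $S:=R(\cdot;x)-R(\cdot;y)$, where $S(0)=0$ and $S'(u)=[c(\phi^z(u;x))-c(x)]z-[c(\phi^z(u;y))-c(y)]z$. The decisive manoeuvre is to rewrite this derivative, using $\phi^z(0;\cdot)=\mathrm{id}$, as
\[
S'(u)=\int_0^u\frac{\di}{\di s}\Big\{[c(\phi^z(s;x))-c(\phi^z(s;y))]z\Big\}\,\di s=\int_0^u\big[F(\phi^z(s;x))-F(\phi^z(s;y))\big]\,\di s,
\]
since $\frac{\di}{\di s}[c(\phi^z(s;x))z]=D(c(\cdot)z)(\phi^z(s;x))\,c(\phi^z(s;x))z=F(\phi^z(s;x))$. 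Now fact (iii) together with (3) gives $|F(\phi^z(s;x))-F(\phi^z(s;y))|\le\|DF\|\,|\phi^z(s;x)-\phi^z(s;y)|\le_C|z|^2\,\ex^{\|Dc\||z|s}|x-y|$, and two integrations over $[0,1]$ (again invoking $\int_0^1\ex^{\|Dc\||z|s}\,\di s\le\ex^{\|Dc\||z|}$) produce exactly the bound in (6).

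The main obstacle is precisely this last estimate. The naive route---expanding the second-order difference $S'(u)$ by the mean value theorem and controlling it through $\|D^2c\|$---forces the factor $|c(\phi^z(s;y))|\sim|z|(1+|y|)$ into the bound and thereby produces a spurious growth factor $(1+|y|)$, which is incompatible with the clean Lipschitz form $\le_C|z|^2\,\ex^{\|Dc\||z|}|x-y|$. The remedy, and the only genuinely non-routine step, is to differentiate once more \emph{along the flow} so that the second-order information enters through the acceleration field $F$, whose uniform Lipschitz bound $\|DF\|\le_C|z|^2$ rests on the product conditions in \textbf{H}$^{(2)}_{c}$: these tame the otherwise unbounded $D^2c\cdot c$ contribution to $DF$, whereas $Dc\cdot Dc$ is bounded automatically. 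This also clarifies why the lemma is stated under \textbf{H}$^{(2)}_{c}$ rather than merely under a first-order Lipschitz hypothesis.
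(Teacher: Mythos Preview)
Your argument is correct. The paper does not actually give a proof of this lemma: it simply states that the estimates ``follow immediately from the Gronwall Lemma'' and refers to Fujiwara--Kunita for similar bounds. Your proposal is therefore consistent with, and considerably more detailed than, what the paper provides. In particular, your treatment of item~(6) via the acceleration field $F(p)=D(c(\cdot)z)(p)\,c(p)z$ and the observation that $\|DF\|\le_C|z|^2$ uniformly---using precisely the product condition $\|c^i_j\cdot\partial^\alpha c^k_l\|<\infty$ in \textbf{H}$^{(2)}_c$---is a clean way to obtain the pure Lipschitz bound without the spurious $(1+|x|)$ factor that the direct second-difference expansion would introduce.
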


Analogously, the application of the Gronwall Lemma to the equation \eqref{e:eqpsi} yields the following estimates
(compare with (1), (2), and (3) in Lemma \ref{t:phi}).
\begin{lem}
Let the conditions $\emph{\textbf{H}}^{(1)}_{a}$, $\emph{\textbf{H}}^{(1)}_{b}$ and $\emph{\textbf{H}}^{(1)}_{c}$. Then
for any $x,y\in\mathbb{R}^d$, $\tau\in\mathbb{R}_+$, $w,z\in\mathbb{R}^m$ we have
\label{l:psi}
\ban
(1)\quad &|\Psi(x;\tau,w,z)| \leq_C(1+|x|)              \ex^{\|Da\|\tau+\|Db\| |w|+\|Dc\| |z|},\\
 (2)\quad &|\Psi(x;\tau,w,z)-x|\leq_C(1+|x|)( \tau +  |w|+|z|)\ex^{\|Da\|\tau + \|Db\| |w|+\|Dc\| |z|},\\
(3)\quad &|\Psi(x;\tau,w,z)-\Psi(y;\tau,w,z)| \leq_C|x-y|\ex^{\|Da\|\tau+\|Db\| |w|+\|Dc\||z|}.
\ean
\end{lem}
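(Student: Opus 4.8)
The plan is to prove Lemma~\ref{l:psi} exactly as the paragraph preceding it announces: apply the Gronwall Lemma to the integral form of the ODE \eqref{e:eqpsi}. Write $\psi(u)=\psi(u;x;\tau,w,z)$ and integrate \eqref{e:eqpsi} from $0$ to $u$ to get
\be
\label{e:psi-int}
\psi(u)=x+\int_0^u\Big(a(\psi(v))\tau+b(\psi(v))w+c(\psi(v))z\Big)\,\di v.
\ee
For part (1), I use $|a(y)|\leq_C(1+|y|)$, $\|b(y)\|\leq_C(1+|y|)$, $\|c(y)\|\leq_C(1+|y|)$, which follow from $\textbf{H}^{(1)}_a$, $\textbf{H}^{(1)}_b$, $\textbf{H}^{(1)}_c$ (the first-order derivative bounds give linear growth). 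Plugging into \eqref{e:psi-int} yields $|\psi(u)|\leq_C (1+|x|)+\int_0^u(\tau+|w|+|z|)(1+|\psi(v)|)\,\di v$, and Gronwall gives $1+|\psi(u)|\leq_C(1+|x|)\ex^{(\tau+|w|+|z|)u}\leq_C(1+|x|)\ex^{\tau+|w|+|z|}$ for $u\in[0,1]$; evaluating at $u=1$ gives (1). (A slightly sharper route uses the Lipschitz constants $\|Da\|,\|Db\|,\|Dc\|$ directly: writing $a(y)=a(0)+(a(y)-a(0))$ etc., one gets the exponent $\|Da\|\tau+\|Db\||w|+\|Dc\||z|$ as stated, absorbing the $a(0),b(0),c(0)$ contributions into the constant and the $(1+|x|)$ prefactor.)

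For part (3), let $\psi(u)=\psi(u;x;\tau,w,z)$ and $\tilde\psi(u)=\psi(u;y;\tau,w,z)$ solve \eqref{e:eqpsi} with initial data $x$ and $y$ respectively. Subtracting their integral forms,
\ba
\label{e:psi-diff}
\psi(u)-\tilde\psi(u)=x-y+\int_0^u\Big(&(a(\psi(v))-a(\tilde\psi(v)))\tau\\
&+(b(\psi(v))-b(\tilde\psi(v)))w+(c(\psi(v))-c(\tilde\psi(v)))z\Big)\,\di v,
\ea
so that $|\psi(u)-\tilde\psi(u)|\leq|x-y|+\int_0^u(\|Da\|\tau+\|Db\||w|+\|Dc\||z|)|\psi(v)-\tilde\psi(v)|\,\di v$ using the global Lipschitz bounds. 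Gronwall gives $|\psi(u)-\tilde\psi(u)|\leq|x-y|\ex^{(\|Da\|\tau+\|Db\||w|+\|Dc\||z|)u}$, and $u=1$ yields (3).

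For part (2), use \eqref{e:psi-int} at $u=1$: $|\Psi(x;\tau,w,z)-x|\leq\int_0^1(|a(\psi(v))|\tau+\|b(\psi(v))\||w|+\|c(\psi(v))\||z|)\,\di v\leq_C(\tau+|w|+|z|)\sup_{v\in[0,1]}(1+|\psi(v)|)$, and then insert the bound $\sup_{v\in[0,1]}(1+|\psi(v)|)\leq_C(1+|x|)\ex^{\|Da\|\tau+\|Db\||w|+\|Dc\||z|}$ established in the proof of (1). This gives (2). There is essentially no obstacle here: everything is a one-line Gronwall argument once the linear-growth and Lipschitz consequences of the hypotheses are recorded; the only mild subtlety is being consistent about whether one states the exponents via the sup-norms of the derivatives (giving the clean $\|Da\|,\|Db\|,\|Dc\|$ constants) or via crude growth bounds (which would give a less sharp but still valid exponent absorbed by $\leq_C$). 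I would present the derivative-norm version to match the statement.
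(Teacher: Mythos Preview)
Your proposal is correct and matches the paper's approach exactly: the paper omits the proof entirely, stating only that ``the application of the Gronwall Lemma to the equation \eqref{e:eqpsi} yields the following estimates (compare with (1), (2), and (3) in Lemma~\ref{t:phi})'', and your argument is precisely this Gronwall computation carried out in full. Your remark about obtaining the sharp exponents $\|Da\|\tau+\|Db\||w|+\|Dc\||z|$ by splitting $a(y)=a(0)+(a(y)-a(0))$ is the right way to match the stated form; the residual additive term $|a(0)|\tau+|b(0)||w|+|c(0)||z|$ this produces is harmless, since (as in Lemma~\ref{t:phi}(1)) one really gets $(|x|+\tau+|w|+|z|)\ex^{\cdots}$, which is dominated by $(1+|x|)\ex^{\cdots}$ up to a constant once the linear prefactor is absorbed into the exponential via $r\leq \ex^{r}$ --- a cosmetic step consistent with the paper's $\leq_C$ convention.
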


\begin{lem}
\label{l:cond}
Let $p\in[2,\infty)$ be fixed and let the condition $\mathbf{H}_{\nu,pK+}$ be satisfied.
Then there is $C\in(0,\infty)$ such that for $h\in(0,1]$
\begin{equation*}
\sup_{0\leq s\leq h}\E (s+|W_s|+|Z_s|)^p \ex^{p\kappa_1 s+p\kappa_2|W_s|+pK|Z_s|}\leq Ch.
\end{equation*}
\end{lem}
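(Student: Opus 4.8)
The plan is to reduce the claim to a single estimate for the L\'evy part. Writing $(s+|W_s|+|Z_s|)^p\leq 3^{p-1}(s^p+|W_s|^p+|Z_s|^p)$, factorising $\ex^{p\kappa_1 s+p\kappa_2|W_s|+pK|Z_s|}=\ex^{p\kappa_1 s}\ex^{p\kappa_2|W_s|}\ex^{pK|Z_s|}$, and using that $W$ and $Z$ are independent, the expectation in question is bounded by
\begin{multline*}
3^{p-1}\ex^{p\kappa_1 s}\Big(s^p\,\E\ex^{p\kappa_2|W_s|}\,\E\ex^{pK|Z_s|}
+\E\big[|W_s|^p\ex^{p\kappa_2|W_s|}\big]\,\E\ex^{pK|Z_s|}\\
+\E\ex^{p\kappa_2|W_s|}\,\E\big[|Z_s|^p\ex^{pK|Z_s|}\big]\Big).
\end{multline*}
For $s\in(0,h]\subseteq(0,1]$ one has $\ex^{p\kappa_1 s}\leq\ex^{p\kappa_1}$ and $s^p\leq s\leq h$; by Brownian scaling $W_s\overset{d}{=}\sqrt{s}\,W_1$ and $\sqrt s\leq 1$,
\begin{gather*}
\E\ex^{p\kappa_2|W_s|}\leq\E\ex^{p\kappa_2|W_1|}<\infty,\\
\E\big[|W_s|^p\ex^{p\kappa_2|W_s|}\big]=s^{p/2}\,\E\big[|W_1|^p\ex^{p\kappa_2\sqrt s|W_1|}\big]\leq s^{p/2}\,\E\big[|W_1|^p\ex^{p\kappa_2|W_1|}\big]\leq_C h,
\end{gather*}
where the last inequality uses $s^{p/2}\leq s\leq h$ (as $p\geq 2$) and $\E[|W_1|^p\ex^{p\kappa_2|W_1|}]<\infty$; finally $\E\ex^{pK|Z_s|}\leq\E\sup_{u\leq1}\ex^{pK|Z_u|}<\infty$ by the Remark following the definition of \textbf{H$_{\nu,A+}$}, applied with $A=pK$ (valid under \textbf{H$_{\nu,pK+}$}). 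Hence everything reduces to showing
\[
\E\big[|Z_s|^p\ex^{pK|Z_s|}\big]\leq_C s,\qquad s\in(0,1].
\]

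I expect this last bound to be the only genuine difficulty, and it is the main obstacle. A H\"older argument is useless: unlike for Brownian motion, $\E|Z_s|^p$ already carries a full power of $s$ rather than $s^{p/2}$, and any splitting by H\"older destroys this power; the jump structure of $Z$ must be used directly, via It\^o's formula. To avoid the lack of smoothness of $z\mapsto|z|^p\ex^{pK|z|}$ at the origin, I would work with the smooth majorant
\[
G(z):=\big(1+|z|^2\big)^{p/2}\exp\!\big(pK\sqrt{1+|z|^2}\big)-\ex^{pK},\qquad z\in\bR^m,
\]
which is $C^\infty$, nonnegative, even (so $G(0)=0$, $\nabla G(0)=0$), satisfies $|z|^p\ex^{pK|z|}\leq_C G(z)$ for all $z$ (comparing the behaviour at the origin, where $G(z)$ is comparable to $|z|^2$ and hence dominates $|z|^p$ since $p\geq 2$, with the behaviour at infinity, where $G(z)$ is comparable to $|z|^p\ex^{pK|z|}$), and whose derivatives obey $|G(z)|+\|\nabla G(z)\|+\|\Hess G(z)\|\leq_C(1+|z|)^p\ex^{pK|z|}$. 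Recalling $Z_t=\int_0^t\int z\,\widetilde N(\di s,\di z)$, I would apply It\^o's formula to $G(Z_s)$, localising at $\tau_n:=\inf\{u\colon|Z_u|>n\}$ and splitting $\widetilde N$ into its restrictions to $\{|z|\leq 1\}$ and $\{|z|>1\}$ so that the two stochastic-integral terms are genuine martingales (this split is forced because we only assume the first-order exponential integrability \textbf{H$_{\nu,pK+}$}), and then pass $n\to\infty$ by dominated convergence with dominating function $C\sup_{u\leq s}(1+|Z_u|)^p\ex^{pK|Z_u|}$, integrable by the Remark; this yields Dynkin's formula
\begin{gather*}
\E\,G(Z_s)=\E\int_0^s\mathcal{L}G(Z_{u-})\,\di u,\\
\mathcal{L}G(z)=\int_{|z'|>0}\big(G(z+z')-G(z)-\langle\nabla G(z),z'\rangle\big)\,\nu(\di z').
\end{gather*}

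It then remains to bound the generator. Splitting the defining integral of $\mathcal{L}G$ at $|z'|=1$, a second-order Taylor estimate bounds the part over $\{|z'|\leq 1\}$ by $\tfrac12\big(\sup_{|\xi-z|\leq 1}\|\Hess G(\xi)\|\big)\int_{|z'|\leq1}|z'|^2\,\nu(\di z')\leq_C(1+|z|)^p\ex^{pK|z|}$, using $\int_{|z|>0}|z|^2\nu(\di z)<\infty$; on $\{|z'|>1\}$ one uses the crude bound $|G(z+z')|+|G(z)|+\|\nabla G(z)\|\,|z'|\leq_C(1+|z|)^p\ex^{pK|z|}\,(1+|z'|)^p\ex^{pK|z'|}$ together with $\int_{|z'|>1}(1+|z'|)^p\ex^{pK|z'|}\,\nu(\di z')<\infty$, which holds by \textbf{H$_{\nu,pK+}$}. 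Hence $|\mathcal{L}G(z)|\leq_C(1+|z|)^p\ex^{pK|z|}$, so $\E|\mathcal{L}G(Z_u)|\leq_C\E\sup_{v\leq 1}(1+|Z_v|)^p\ex^{pK|Z_v|}<\infty$ uniformly in $u\leq 1$ (again the Remark). Inserting this into Dynkin's formula gives $\E[|Z_s|^p\ex^{pK|Z_s|}]\leq_C\E\,G(Z_s)\leq_C s$ for $s\in(0,1]$. Combined with the first paragraph, every term in the above bound is $\leq_C h$ uniformly in $s\in(0,h]$; since the expression vanishes at $s=0$, taking $\sup_{0\leq s\leq h}$ finishes the proof.
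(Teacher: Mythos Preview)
Your proof is correct. Both you and the paper reduce to the L\'evy piece and attack it via It\^o's formula, but the implementations differ. The paper first passes to $m=1$, replaces the absolute value by $\ex^{|x|}\leq\ex^{x}+\ex^{-x}$, applies It\^o to the elementary function $g(z)=z\,\ex^{Kz}$, and then obtains the $p$-th moment (in fact, even the stronger maximal bound $\E\sup_{s\leq h}|Z_s|^p\ex^{pKZ_s}\leq_C h$) by invoking Novikov's maximal inequality for the resulting stochastic integral. You instead stay in $\bR^m$, build the $p$-th power and the exponential weight into a single smooth even majorant $G$, and use Dynkin's formula together with a direct generator estimate; your observation that the large-jump stochastic integral must be split off and handled in $L^1$ (because only $\mathbf{H}_{\nu,pK+}$ is available, not $\mathbf{H}_{\nu,2pK+}$) is accurate and is precisely what makes the localization step go through. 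The paper's route is shorter because the heavy lifting is outsourced to Novikov's inequality, while your route is more self-contained at the cost of constructing $G$ and estimating $\mathcal{L}G$ by hand.
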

\begin{proof}
Let $0\leq s\leq h\leq 1$.
Due to the independence of $W$ and $Z$,
it is sufficient to estimate the expectations
\ban
\E |W_s|^p \ex^{pK|W_s|} \quad\text{ and }\quad \E |Z_s|^p \ex^{pK|Z_s|} .
\ean
The estimate for $W$ follows from the self-similarity of the Brownian motion.
To get the estimate for $Z$, we assume for simplicity that $m=1$.
We take into account that $\ex^{|x|}\leq \ex^{x}+ \ex^{-x}$, so that it is sufficient to estimate
$\E |Z_s|^p \ex^{pKZ_s}$.

Let $g(z):=z\ex^{Kz}$. By the It\^o formula, we have
\ban
g(Z_t)
&= \int_0^t\int_{|z|>0} ( g(Z_{s-}+z)- g(Z_{s-})) \,\widetilde N(\di z,\di s) \\
&+
\int_0^t\int_{|z|>0}  ( g(Z_{s}+z) - g(Z_{s}) - ( 1+KZ_{s}) \ex^{KZ_{s-}} z)    \,\nu(\di z)\,\di s,
\ean
and by Novikov's inequality (Theorem 4.20 in \cite{kuhn2023maximal}) we get
\ban
\E\sup_{s\leq h}|Z_s|^p\ex^{p K Z_s}&
\leq_C \E\int_0^h \Big( \int_{|z|>0}
| g(Z_{s}+z) - g(Z_{s})|^2 \,\nu(\di z)\Big)^{p/2}\,\di s\\
&+\E \int_0^h \int_{|z|>0} | g(Z_{s}+z) - g(Z_{s}) |^p \,\nu(\di z)\,\di s\\
&+\E \Big(\int_0^h \int_{|z|>0}
 | g(Z_{s}+z) - g(Z_{s})-   ( 1+KZ_{s}  ) \ex^{KZ_{s}} z  |
\,\nu(\di z)\,\di s\Big)^p\\
&\leq_C h
\ean
under the condition
$$
\int_{|z|>1} |z|^{p}\ex^{pK|z|}\,\nu(\di z) \leq C \int_{|z|>1} \ex^{(pK+\e)|z|}\,\nu(\di z)<\infty.
$$
\end{proof}

\section{Proof of Lemma \ref{l:bounded}}

To simplify the notation, we consider the one-di\-men\-si\-o\-nal case $d=m=1$ only. We will still use the notation $Da=a'$ etc.
The multivalued estimates are straightforward.
We estimate the derivatives of $\Psi$. The Lipschitz property of $\Psi$ and its derivatives is studied analogously.

\noindent
\textbf{0. Estimate of $\Psi$.}
Since the coefficients $a$, $b$ and $c$ are bounded, we get
\ban
\sup_{u\in[0,1]}|\psi(u;x;\tau,w,z)|\leq |x| + \|a\|\tau + \|b\||w|+\|c\||z| \leq_C (1+|x|)\ex^{\e( \tau+ |w|+|z|)}
\ean
for any $\e>0$.

\noindent
\textbf{1. Estimates of $\Psi_\tau$, $\Psi_{w^i}$, $\Psi_{z^j}$.}
The derivative w.r.t.\ $\tau$ satisfies the linear non-autonomous ODE
\ban
\frac{\di}{\di u}&\psi_\tau=a(\psi)+(Da (\psi)\tau +Db(\psi)w+Dc(\psi)z) \psi_\tau,\quad
\psi_\tau(0;x;\tau,w,z)=0.
\ean
Hence the Gronwall Lemma yields
\ban
\sup_{u\in[0,1]}|\psi_\tau(u)|&=\|a\| \ex^{\|Da\|\tau +\|Db\||w|+\|Dc\||w|}\leq\|a\|(1+|x|)\ex^{\|Da\|\tau +\|Db\||w|+\|Dc\||w| }.
\ean
A similar estimate holds for all first order derivatives.

\noindent
\textbf{2. Estimates of  $\Psi_{\tau\tau}$, $\Psi_{\tau w^j}$, $\Psi_{\tau z^j}$, $\Psi_{w^jw^k}$, $\Psi_{w^jz^k}$, $\Psi_{z^jz^k}$.}
We consider the derivatives $\Psi_{\tau\tau}$ and $\Psi_{\tau w}$ only. We have
\ban
\frac{\di}{\di u}& \psi_{\tau\tau}
=2 Da (\psi) \psi_\tau  + \Big(D^2a(\psi)\tau  +D^2b(\psi)w +D^2c(\psi)z\Big)\psi_\tau^2\\
&+ \Big(Da(\psi)\tau  + Db(\psi)w + Dc(\psi)z\Big) \psi_{\tau\tau},\\
&\psi_{\tau\tau}(0;x;\tau,w,z)=0,\\
\frac{\di}{\di u} &\psi_{\tau w}=Da(\psi)\psi_w+ Db(\psi)\psi_\tau+
\Big(D^2 a(\psi)\tau +D^2 b(\psi)w + D^2 c(\psi)z\Big) \psi_\tau  \psi_w\\
&\qquad +\Big(Da(\psi)\tau +Db(\psi)w+Dc(\psi)z\Big) \psi_{\tau w},\\
&\psi_{\tau w}(0;x;\tau,w,z)=0.
\ean
Applying the Gronwall Lemma and using the estimates from the previous steps yields
\ban
|\psi_{\tau\tau}(u)|&\leq_C (\tau+|w|+|z|) \ex^{3\|Da\|\tau +3\|Db\||w|+3\|Dc\||w|}\\
&\leq_C(\tau+|w|+|z|) (1+|x|)\ex^{3\|Da\|\tau +3\|Db\||w|+3\|Dc\||w| },\\
|\psi_{\tau w}(u)|&\leq_C (\tau+|w|+|z|) \ex^{3\|Da\|\tau +3\|Db\||w|+3\|Dc\||w|}\\
&\leq_C (\tau+|w|+|z|) (1+|x|)\ex^{3\|Da\|\tau +3\|Db\||w|+3\|Dc\||w| }.
\ean

\noindent
\textbf{3. Estimates of $\Psi_{\tau\tau\tau}$, $\Psi_{\tau \tau w^j}$, $\Psi_{\tau \tau  z^j}$, $\Psi_{\tau w^jw^k}$, \dots}
We consider the derivative $\Psi_{\tau\tau\tau}$:
\ban
&\frac{\di}{\di u}\psi_{\tau\tau\tau}
=3D^2 a(\psi) \psi_\tau^2 + 3Da(\psi) \psi_{\tau\tau} + 3\Big(D^2 a(\psi)\tau  +D^2 b(\psi)w +D^2 c(\psi)z\Big)\psi_\tau\psi_{\tau\tau}\\
&\qquad + \Big(D^3 a(\psi)\tau     +D^3 b(\psi)w  +D^3 c(\psi)z \Big)\psi_\tau^3
+ \Big(D a(\psi)\tau  + D b(\psi)w + D c(\psi)z\Big) \psi_{\tau\tau\tau},\\
&\psi_{\tau\tau\tau}(0;x;\tau,w,z)=0.
\ean
Hence we get
\ban
|\psi_{\tau\tau\tau}(u)|\leq C_2 (\tau+|w|+|z|)^2 (1+|x|)\ex^{5\|Da\|\tau +5\|Db\||w|+5\|Dc\||w| }.
\ean
A similar estimate holds for all third order derivatives.

\section{Proof of Theorem \ref{t:existenceSDE}\label{s:existence}}

The statement of Theorem \ref{t:existenceSDE} follows immediately from Theorems 3.1 and 3.2 in Kunita \cite{Kunita-04}.
Indeed, we have to check the following conditions.

The functions
\ban
x&\mapsto a^i(x),\quad i=1,\dots, d,\\
x&\mapsto b^i(x),\quad i=1,\dots,d,\\
x&\mapsto \frac{\partial}{\partial x^l}b^i(x) b_j^l(x) , \quad i,l=1,\dots, d,\ j=1,\dots, m,
\ean
are globally Lipschitz continuous and of linear growth by assumptions $\mathbf{H}_{a}^{(1)}$,
$\mathbf{H}_{b}^{(2)}$ and
$\mathbf{H}_{c}^{(2)}$.

Due to Lemma \ref{t:phi} (5) and (6), the mapping
$$
x\mapsto \int_{|z|>0} \Big(\phi^z(x)-x - c(x)z \Big)\,\nu(\di z)
$$
is globally Lipschitz continuous under the condition $\mathbf{H}_{c}^{(2)}$ provided
\ban
\int_{|z|>0} |z|^2 \ex^{\|Dc\||z|}\,\nu(\di z)<\infty.
\ean
Due to Lemma \ref{t:phi} (2) and (4), for all $x,y\in\bR^d$ and $z\in\bR^m$ we have
\ban
\frac{|\phi^z(x)-x|}{1+|x|}  &\leq_C |z|\ex^{\|Dc\||z|},\\
|\phi^z(x)-x- \phi^z(y)+y  | &\leq_C |z|\ex^{\|Dc\||z|}|x-y|,\\
\ean
Thus for $p\in[2,\infty)$, condition (3.2) in Kunita \cite{Kunita-04} follows from the assumption  $\mathbf{H}_{\nu,p\|Dc\|+}$:
\ban
\int_{|z|>0} |z|^p \ex^{ p \|Dc\| |z|}\,\nu(\di z)
\leq_C \int_{0<|z|\leq 1} |z|^2 \,\nu(\di z)
+ \int_{|z|>1} \ex^{ (p \|Dc\|+\e) |z|}\,\nu(\di z)
<\infty.
\ean

\section{Proof of Theorem \ref{t:existenceWZ}}

To simplify the notation, we assume that $d=1$.
Let $p\in[2,\infty)$ and $t\in[0,T]$.
For $x\in\bR^d$ fixed, for brevity we will denote $X_t=X_t(x)$, $X^h_t=X^h_t(x)$ and $\bar{X}^h_t=\bar{X}^h_t(x)$.
It is easy to see that for each $x\in\bR$, the mapping
$$
(x;\tau,w,z)\mapsto \Psi(x;\tau,w,z)
$$
is $C^2$-smooth.

For the time step $h\in(0,1]$ and $t\in[0,\infty)$, we denote
\ba
n_t^h&:=\begin{cases}
         0,\quad t=0,\\
         k,\quad t\in(kh,(k+1)h],\quad k\in\mathbb N_0,
        \end{cases}\\
t^h &:=t-n_t^h h,\\
\Delta W^h_t& :=W_t -  W_{n_t^h h },\\
\Delta Z^h_t& :=Z_t -  Z_{n_t^h h }.
\ea
The continuous time Wong--Zakai type scheme $\bar X^h$, see \eqref{e:Euler}, is written in the following unified form:
\ba
\label{e:Euler1}
\bar X^h_0&=x,\\
\bar X^h_t&=\Psi(X^h_{n_t^h h}; t^h, \Delta W_t^h,\Delta Z_t^h),\quad t\in(0,\infty).
\ea
We also recall that $X^h_{kh}=\bar X^h_{kh}$, so that
$X^h_{n_t^hh}=\bar X^h_{n_t^hh}$ for any $t\in[0,\infty)$. Applying the It\^o formula on each interval
$t\in(kh,(k+1)h]$ we see that
the process $\bar{X}^h$ satisfies the following SDE:
\ba
\label{e:SDEbarX}
&\bar{X}^h_t= x+ \int_0^t \Psi_\tau(\bar X^h_{n_s^hh};s^h,\Delta W^h_s,\Delta Z^h_s)\,\di s \\
&+ \int_0^t \langle\Psi_w(\bar X_{n_s^hh};s^h,\Delta W^h_s,\Delta Z^h_s)\,\di  W_s\rangle
+\frac{1}{2} \int_0^t \operatorname{tr}\Big(\Psi_{ww}(\bar X^h_{n_s^hh};s^h,\Delta W^h_s,\Delta Z^h_s)\Big)\,\di s \\
&+\int_0^t\int_{|z|>0}\Big(\Psi(\bar X^h_{n_s^hh};s^h,\Delta W^h_s,\Delta Z^h_{s-}+z)-\Psi(\bar X^h_{n_s^hh};s^h,\Delta W^h_s,\Delta Z^h_{s-})\Big)\,
\widetilde{N}(\di z,\di s) \\
&+
\int_0^t\int_{|z|>0}\Big(\Psi(\bar X^h_{n_s^hh};s^h,\Delta W^h_s,\Delta Z^h_{s-}+z)-\Psi(\bar X^h_{n_s^hh};s^h,\Delta W^h_s,\Delta Z^h_{s-})\\
&\qquad \qquad\qquad- \langle \Psi_z(\bar X^h_{n_s^hh};s^h,\Delta W^h_s,\Delta Z^h_{s-}+z),z \rangle \Big)\, \nu(\di z)\,\di s .
\ea
Note that $\bar X^h_{n_s^hh}=\bar X^h_{n_{s-}^hh}=\bar X^h_{kh}$ for $s\in(kh,(k+1)h]$. Since
$\P(\Delta Z_{kh}=0,\ k\in \mathbb N_0)=1$
for each $h\in(0,1]$, for convenience we write $\bar X^h_{n_s^hh}$ instead of $\bar X^h_{n_{s-}^hh}$.
The Novikov maximal inequality (see, Eq.\ (4.17) in \cite{kuhn2023maximal}) yields:
\ba
\label{e:ku}
\E \sup_{s\in[0,t]}& |\bar{X}^h_s|^p\leq_C |x|^p
+  \int_0^t \E\Big|\Psi_\tau(\bar X^h_{n_s^hh};s^h,\Delta W^h_s,\Delta Z^h_s)\Big|^p\,\di s \\
& +\int_0^t\E \Big|\Psi_w(\bar X_{n_s^hh};s^h,\Delta W^h_s,\Delta Z^h_s)\Big|^p\,\di s
+ \int_0^t\E \Big|\operatorname{tr}\Big(\Psi_{ww}(\bar X^h_{n_s^hh};s^h,\Delta W^h_s,\Delta Z^h_s)\Big)\Big|^p\,\di s \\
&+\int_0^t\E \Big(\int_{|z|>0}\Big|\Psi(\bar X^h_{n_s^hh};s^h,\Delta W^h_s,\Delta Z^h_{s}+z)-\Psi(\bar X^h_{n_s^hh};s^h,\Delta W^h_s,\Delta Z^h_{s})\Big|^2\,
\nu(\di z)\Big)^\frac{p}{2}\,\di s \\
&+\int_0^t\E \int_{|z|>0}\Big|\Psi(\bar X^h_{n_s^hh};s^h,\Delta W^h_s,\Delta Z^h_{s}+z)-\Psi(\bar X^h_{n_s^hh};s^h,\Delta W^h_s,\Delta Z^h_{s})\Big|^p\,
\nu(\di z)\,\di s \\
&+
\int_0^t\E\Big|\int_{|z|>0}\Big(\Psi(\bar X^h_{n_s^hh};s^h,\Delta W^h_s,\Delta Z^h_{s-}+z)-\Psi(\bar X^h_{n_s^hh};s^h,\Delta W^h_s,\Delta Z^h_{s-})\\
&\qquad\qquad\qquad   -  \langle\Psi_z(\bar X^h_{n_s^hh};s^h,\Delta W^h_s,\Delta Z^h_{s-}+z),z \rangle\Big)\, \nu(\di z)\Big|^p\,\di s .
\ea
By assumption $\mathbf{H}_{\Psi,K}$ and independence of $\bar X^h_{n_s^hh}$ of $(\Delta W^h_s,\Delta Z^h_s)$ we have
\ba
\label{e:kest}
\E|\Psi_\tau(\bar X^h_{n_s^hh};s^h,\Delta W^h_s,\Delta Z^h_s)|^p
&\leq_C \E(1+|\bar X^h_{n_s^hh}|^p) \E \sup_{s\in [0,h]}\ex^{p\kappa_1 h+p\kappa_2| W_s|+pK|Z_s|}\\
&\leq_C (1+\E\sup_{r\in[0,s]} |\bar X^h_{r}|^p),\\
\ea
and analogously
\ban
\E|\Psi_w(\bar X^h_{n_s^hh};s^h,\Delta W^h_s,\Delta Z^h_s)|^p &\leq_C (1+\E\sup_{r\in[0,s]} |\bar X^h_{r}|^p),\\
\E|\operatorname{tr}(\Psi_{ww}(\bar X^h_{n_s^hh};s^h,\Delta W^h_s,\Delta Z^h_s))|^p &\leq_C(1+\E\sup_{r\in[0,s]} |\bar X^h_{r}|^p),\\
\ean
Since for $p\in[2,\infty)$ we can estimate
\ban
\Big|\Psi(x;\tau,w,z+\zeta)-\Psi(x;\tau,w,z)\Big|^p
&=\Big| \int_0^1 \langle \Psi_{z}(x;\tau,w,z+\theta\zeta),\zeta\rangle \,\di \theta\Big|^p\\
&\leq_C|\zeta|^p (1+|x|^p) \ex^{p\kappa_1 \tau +p\kappa_2|w| + pK|z|+pK|\zeta|}
\ean
and
\ban
\Big|\Psi(x;\tau,w,z+\zeta)-\Psi(x;\tau,w,z)
&-  \langle\Psi_{z}(x;\tau,w,z),\zeta \rangle\Big|\\
&\leq_C|\zeta|^2 (1+|x|) \ex^{\kappa_1 \tau +\kappa_2|w| + K|z|+K|\zeta|},
\ean
the integrals w.r.t.\ $\nu$ in \eqref{e:ku} have the same bound as in \eqref{e:kest}.
Therefore get the inequality
\ban
\E \sup_{s\in[0,t]} |\bar{X}^h_s|^p&\leq_C |x|^p+ \int_0^t \Big(1+\E \sup_{r\in[0,s]} |\bar{X}^h_r|^p\Big)\,\di s,\quad t\in[0,T],
\ean
and the hence by Gronwall's lemma  we get
\ban
\E \sup_{s\in[0,T]} |\bar{X}^h_s|^p\leq_C 1+|x|^p.
\ean
The estimate \eqref{e:supXxy} is obtained analogously with the help of assumption \textbf{H$_{\Psi,K}$} for the differences.

\section{Proof of Theorem \ref{thm:Xx-Xhx}}

To simplify the notation, we assume that $d=1$. Let $p\in[2,\infty)$.
For $x\in\bR^d$ fixed, for brevity we will denote $X_t=X_t(x)$, $X^h_t=X^h_t(x)$ and $\bar{X}^h_t=\bar{X}^h_t(x)$.

Firstly, we write the difference $X-\bar{X}^h$ in the It\^o integral form:
\ba
\label{e:diff}
&X_t-\bar{X}^h_t\\
&= \int_0^t a(X_s)\,\di s - \int_0^t \Psi_\tau(X^h_{n_s^hh},s^h,\Delta W^h_s,\Delta Z^h_s)\,\di s \\
&+ \int_0^t \langle b(X_s),\,\di W_s \rangle
- \int_0^t \langle \Psi_w(X_{n_s^hh},s^h,\Delta W^h_s,\Delta Z^h_s),\,\di  W_s\rangle \\
&+\frac{1}{2} \int_0^t \langle b'(X_s),b(X_s)\rangle \, \di s
-\frac{1}{2} \int_0^t\operatorname{tr}(\Psi_{ww}(X^h_{n_s^hh},s^h,\Delta W^h_s,\Delta Z^h_s))\,\di s \\
&+ \int_0^t\int_{|z|>0}\Big(\varphi^z(X_{s-})-X_{s-}\Big)\, \widetilde{N}(\di z,\di s) \\
&-\int_0^t\int_{|z|>0}\Big(\Psi(X^h_{n_s^hh},s^h,\Delta W^h_s,\Delta Z^h_{s-}+z)-\Psi(X^h_{n_s^hh},s^h,\Delta W^h_s,\Delta Z^h_{s-})\Big)\,
\widetilde{N}(\di z,\di s) \\
&+\int_0^t\int_{|z|>0}\Big(\varphi^z(X_{s})-X_s- \langle c(X_s),z\rangle\Big)\,\nu(\di z)\, \di s \\
&-
\int_0^t\int_{|z|>0}\Big(\Psi(X^h_{n_s^hh},s^h,\Delta W^h_s,\Delta Z^h_{s}+z)-\Psi(X^h_{n_s^hh},s^h,\Delta W^h_s,\Delta Z^h_{s})\\
&\qquad \qquad \qquad\qquad \qquad \qquad
- \langle\Psi_z(X^h_{n_s^hh},s^h,\Delta W^h_s,\Delta Z^h_{s}+z),z\rangle \Big)\, \nu(\di z)\,\di s.
\ea
Let us introduce the following processes that appear in \eqref{e:diff}
\ban
F^a(s):=& a(X_s) - \Psi_\tau (X^h_{n_s^hh},s^h,\Delta W^h_s,\Delta Z^h_s),\\
F^{b}(s):=& b(X_s) - \Psi_w(X^h_{n_s^hh},s^h,\Delta W^h_s,\Delta Z^h_s),\\
F^{b'b}(s):=& \langle b'(X_s),b(X_s) \rangle
- \operatorname{tr}(\Psi_{ww}(X^h_{n_s^hh},s^h,\Delta W^h_s,\Delta Z^h_s)),\\
F^{\tilde{N}}(s-,z):=& \varphi^z(X_{s-}) - X_{s-} \\
&- \Psi(X^h_{n_s^hh},s^h,\Delta W^h_s,\Delta Z^h_{s-}+z)+\Psi(X^h_{n_s^hh},s^h,\Delta W^h_s,\Delta Z^h_{s-}) , \\
F^{\nu}(s,z):=& \varphi^z(X_{s})-X_s- \langle c (X_s),z \rangle \\
&- \Psi(X^h_{n_s^hh},s^h,\Delta W^h_s,\Delta Z^h_{s}+z)-\Psi(X^h_{n_s^hh},s^h,\Delta W^h_s,\Delta Z^h_s) \\
&\qquad \qquad \qquad + \Psi_z(X^h_{n_s^hh},s^h,\Delta W^h_s,\Delta Z^h_s+z),z\rangle.
\ean
We decompose each of these terms into a sum of three terms as follows:
\ba
\label{e:F}
F^a_1(s)& =   a(X_s)-a(\bar{X}^h_s), \quad F^a_2(s) =  a(\bar{X}^h_s)-a(X^h_{n_s^hh}), \\
F^a_3(s)& =  a(X^h_{n_s^hh})-\Psi_\tau(X^h_{n_s^hh},s^h,\Delta W^h_s,\Delta Z^h_s), \\
F^{b}_1(s)& =b(X_s)-b(\bar{X}^h_s), \quad F^{b}_2(s) =b(\bar{X}^h_s)-b(X^h_{n_s^hh}) ,\\
F^{b}_3(s)& =b(X^h_{n_s^hh})-\Psi_w(X^h_{n_s^hh},s^h,\Delta W^h_s,\Delta Z^h_s), \\
F^{b'b}_1(s)& =\langle b'(X_s), b(X_s) \rangle - \langle b'(\bar{X}^h_s) , b(\bar{X}^h_s)\rangle , \\
F^{b'b}_2(s)&=  \langle b'(\bar{X}^h_s), b(\bar{X}^h_s) \rangle - \langle b'(X^h_{n_s^hh}) , b(X^h_{n_s^hh})\rangle , \\
F^{b'b}_3(s)&=  \langle b'( X^h_{n_s^hh}), b( X^h_{n_s^hh} ) \rangle
-\operatorname{tr}(\Psi_{ww}(X^h_{n_s^hh},s^h,\Delta W^h_s,\Delta Z^h_s)), \\
F^{\widetilde{N}}_1(s-,z)&= \varphi^z(X_{s-})-X_{s-}- \varphi^z(\bar{X}^h_{s-})+\bar{X}^h_{s-} , \\
F^{\widetilde{N}}_2(s-,z)&=  \varphi^z(\bar{X}^h_{s-})-\bar{X}^h_{s-}- \varphi^z(X^h_{n_s^hh})+X^h_{n_s^hh} , \\
F^{\widetilde{N}}_3(s-,z)&=  \varphi^z(X^h_{n_s^hh})-X^h_{n_s^hh}\\
&- \Psi(X^h_{n_s^hh},s^h,\Delta W^h_s,\Delta Z^h_{s-}+z)+\Psi(X^h_{n_s^hh},s^h,\Delta W^h_s,\Delta Z^h_{s-}) ,  \\
F^{\nu}_1(s,z)&=  \varphi^z(X_{s})-X_{s}- \langle c(X_{s}),z\rangle
 - \varphi^z(\bar{X}^h_{s})+\bar{X}^h_{s}+ \langle c(\bar{X}^h_{s}),z\rangle, \\
F^{\nu}_2(s,z)&=  \varphi^z(\bar{X}^h_{s})-\bar{X}^h_{s}- \langle c (\bar{X}^h_{s}),z\rangle
 - \varphi^z(X^h_{n_s^hh})-X^h_{n_s^hh}+ \langle c(X^h_{n_s^hh}),z\rangle , \\
F^{\nu}_3(s,z)&=  \varphi^z(X^h_{n_s^hh})-X^h_{n_s^hh}- \langle c(X^h_{n_s^hh}),z\rangle
- \Psi(X^h_{n_s^hh},s^h,\Delta W^h_s,\Delta Z^h_{s}+z)\\
&+\Psi(X^h_{n_s^hh},s^h,\Delta W^h_s,\Delta Z^h_{s})
+ \langle \Psi_z(X^h_{n_s^hh},s^h,\Delta W^h_s,\Delta Z^h_{s}+z),z\rangle .
\ea

We calculate the expectation of each terms.
Let $0\leq t\leq T<\infty$.

\noindent
1. Consider the terms $F^a_1$, $F^{b}_1$, $F^{b'b}_1$, $F^{\widetilde{N}}_1$, $F^\nu_1$.

\noindent
1a. First, we estimate the term $F^a_1$:
\ban
\E \sup_{0\leq s\leq t}\Big|\int_0^s F^a_1(u)\,\di u\Big|^p
&\leq_C \E  \int_0^{t}|F^a_1(s)|^p\,\di s\\
&\leq_C \E\int_0^{t}|X_s-\bar{X}^h_s|^p\, \di s
\leq C_{1\text{a}} \int_0^t  \E  \sup_{0\leq u\leq s}|X_u-\bar{X}^h_u|^p \, \di s.
\ean
1b. Analogously,
\ban
\E \sup_{0\leq s\leq t}\Big|\int_0^s F^{b'b}_{1}(u)\,\di u\Big|^p
\leq
C_{1\text{b}} \int_0^t  \E  \sup_{0\leq u\leq s}|X_u-\bar{X}^h_u|^p \, \di s.
\ean
1c. To estimate the term with $F^{b}_1$ we use the moment inequality for stochastic integrals (see \cite{Mao2007}, Theorem 7.2)
\ban
\E \sup_{0\leq s\leq t}\Big|\int_0^s  \langle F^{b}_1(u), \,\di W_s\rangle\Big|^p
&\leq_C \int_0^t  \E |F^{b}_1(u)|^p\,\di s\\
&\leq C_{1\text{c}} \int_0^t  \E\sup_{0\leq u\leq s}|X_u-\bar{X}^h_u|^p \, \di s.
\ean
1d. To estimate the term with $F^{\tilde N}_1$ we use Novikov's inequality (see Theorem 4.20 in \cite{kuhn2023maximal}):
\ban
&\E \sup_{0\leq s\leq t}\Big|\int_0^s\int_{|z|>0} F^{\widetilde N}_1(u;z)\,\widetilde N(\di z,\di u)\Big|^p\\
&\leq_C \E\Big(\int_0^t\int_{|z|>0} |F^{\widetilde N}_1(u;z)|^2\, \nu(\di z)\,\di u \Big)^{p/2}
+ \E \int_0^t\int_{|z|>0} |F^{\widetilde N}_1(u;z)|^p\, \nu(\di z)\,\di u\\
&\leq_C \E \int_0^t\Big(\int_{|z|>0} |F^{\widetilde N}_1(u;z)|^2\, \nu(\di z)\Big)^{p/2}\,\di u
+  \E \int_0^t\int_{|z|>0} |F^{\widetilde N}_1(u;z)|^p\, \nu(\di z)\,\di u\\
&\leq_C \E \int_0^t\Big(\int_{|z|>0} \|\phi^z-\mathrm{Id}\|_\text{Lip}^2   | X_{u}-\bar{X}^h_{u}   |^2\, \nu(\di z)\Big)^{p/2}\,\di u\\
&+  \E \int_0^t \int_{|z|>0} \|\phi^z-\mathrm{Id}\|_\text{Lip}^p   | X_{u}-\bar{X}^h_{u}   |^p \, \nu(\di z) \,\di u\\
&\leq   \Big(\int_{|z|>0}
\|\phi^z-\mathrm{Id}\|_\text{Lip}^2 \, \nu(\di z)\Big)^{p/2}
\int_0^t  \sup_{0\leq r\leq u}\E| X_{r}-\bar{X}^h_{r}   |^p \,\di u\\
&+ \int_{|z|>0} \|\phi^z-\mathrm{Id}\|_\text{Lip}^p \, \nu(\di z)
\cdot  \int_0^t \E  \sup_{0\leq r\leq u}| X_{r}-\bar{X}^h_{r}|^p \,\di u\\
&\leq C_{1\di} \int_0^t \E  \sup_{0\leq u\leq s}| X_{u}-\bar{X}^h_{u}|^p \,\di s,
\ean
where we have used that by Lemma \ref{t:phi} (4)
\ban
\|\phi^z-\text{Id}\|_\text{Lip}\leq_C |z|\ex^{\|Dc\||z|}.
\ean
1e. Finally, we recall Lemma \ref{t:phi} (6), namely, that
\ban
\|\phi^z(\cdot)-\text{Id}-c(\cdot)z\|_\text{Lip}\leq_C |z|^2\ex^{\|Dc\||z|}.
\ean
Therefore,
\ban
\E &\sup_{0\leq s\leq t}\Big|\int_0^s\int_{|z|>0} F^{\nu}(u;z)\,\nu(\di z)\,\di u\Big|^p
\leq_C \E  \int_0^t  \Big| \int_{|z|>0} F^{\nu}(u;z)  \,\nu(\di z)\Big|^p\,\di u\\
&\leq \Big(
\int_{|z|>0} \|\phi^z(\cdot)-\text{Id}-c(\cdot)z\|_\text{Lip}\,\nu(\di z)\Big)^p
\cdot \int_0^t \E  \sup_{0\leq r\leq u}| X_{r}-\bar{X}^h_{r}|^p \,\di u\\
&= C_{1\ex}\int_0^t \E \sup_{0\leq u\leq s}| X_{u}-\bar{X}^h_{u}|^p \,\di s.
\ean

\noindent
2. Consider the terms $F^a_2$, $F^b_2$, $F^{b'b}_2$, $F^{\widetilde{N}}_2$ and $F^\nu_2$. Using the Lipschitz continuity estimates analogous to Step 1,
we get that all the expectations containing these terms are bounded from above by
\ban
\text{Const}\cdot \E  \int_0^{t}|\bar{X}^h_s-X^h_{n_sh}|^p\,\di s
= \text{Const}\cdot \E \int_0^{t}|\Psi(X^h_{n_sh},s^h,\Delta W^h_s,\Delta Z^h_s)-X^h_{n_sh}|^p\,\di s.
\ean
By means of Lemma~\ref{l:psi} (c), for each $s\in[0, T]$ we estimate
\ban
&|\Psi(X^h_{n_sh},s^h,\Delta W^h_s,\Delta Z^h_s)-X^h_{n_sh}|\\
&\qquad\qquad \qquad\leq_C(1+|X^h_{n_sh}|)( s^h+|\Delta W^h_s|+|\Delta Z^h_s|)\ex^{\|Da\|s^h+\|Db\||\Delta W^h_s|  +\|Dc\| |\Delta Z^h_s|}.
\ean
Since $\Delta W^h_s$ and $\Delta Z^h_s$ are independent of $X^h_{n_sh}$, by Lemma \ref{l:cond} we get
\ban
\E &|\Psi(X^h_{n_sh},s^h,\Delta W^h_s,\Delta Z^h_s)-X^h_{n_sh}|^p\\
&\leq_C (1+\E |X^h_{n_sh}|^p)
\cdot \E  ( s^h+|\Delta W^h_s|+|\Delta Z^h_s|)^p\ex^{p\|Da\|s^h +p\|Db\||\Delta W^h_s| +p\|Dc\| |\Delta Z^h_s|}\\
&\leq_C h (1+\E |X^h_{n_sh}|^p) ,
\ean
so that by Theorem \ref{t:existenceWZ} all the terms of the group 2 are bounded by
\ban
\text{Const}\cdot h \Big(1+ \max_{kh\leq T}\E |\bar X^h_{kh}|^p\Big) \leq \text{Const}\cdot h (1+|x|^p ) .
\ean
3. We estimate the terms of the group 3 separately.

\noindent
3a.
For $F^a_3$, since
$$
\Psi_\tau(x,0,0,0) =a(x),
$$
the Taylor expansion yields
\ban
\Psi_\tau(x,\tau,w,z)-a(x)
&= \Psi_\tau(x,\tau,w,z)- \Psi_\tau(x,0,0,0)\\
&=\int_0^1 \Psi_{\tau\tau}(x,\theta\tau,\theta w,\theta z)\tau\, \di \theta \\
&+  \int_{0}^1 \langle \Psi_{\tau w}(x,\theta\tau,\theta w,\theta z),w\rangle\, \di \theta
+ \int_0^1 \langle \Psi_{\tau z}(x,\theta\tau,\theta w,\theta z),z\rangle\, \di \theta .
\ean
Then for $s\in(n_sh,(n_s+1)h]$,
\ban
|\Psi_\tau(X^h_{n_sh},s^h,\Delta W^h_s,\Delta Z^h_s)&-a(X^h_{n_sh})|^p
\leq_C
h^p\sup_{0\leq \theta\leq 1}|\Psi_{\tau\tau}(X^h_{n_sh},\theta s^h,\theta \Delta W^h_s,\theta \Delta Z^h_s)|^p\\
&+|W^h_s|^p \sup_{0\leq \theta\leq 1} |\Psi_{\tau w}(X^h_{n_sh},\theta s^h,\theta \Delta W^h_s,\theta \Delta Z^h_s)|^p \\
&+|Z^h_s|^p \sup_{0\leq \theta\leq 1} |\Psi_{\tau z}(X^h_{n_sh},\theta s^h,\theta \Delta W^h_s,\theta \Delta Z^h_s)|^p.
\ean
By Conditions $\textbf{H}_{\Psi,K}$ and $\textbf{H}_{\nu,pK+}$ for the second derivatives we have
\ban
|\Psi_\tau&(X^h_{n_sh},s^h,  \Delta W^h_s,\Delta Z^h_s)-a(X^h_{n_sh})|^p\\
&\leq_C
(1+|X^h_{n_sh}|^p) (h^p+|\Delta W^h_s|^p+|\Delta Z^h_s|^p)\ex^{p\kappa_1 s^h+p\kappa_2|\Delta W^h_s|+p K|\Delta Z^h_s|},
\ean
what leads to the final estimate of the integral:
\ban
&\E\sup_{0\leq s\leq t}\Big|\int_0^s |\Psi_\tau(X^h_{n_uh},s^h,\Delta W^h_u,\Delta Z^h_u)-a(X^h_{n_u h})|^p\, \di u\Big|^p \\
&\leq_C
\int_0^{t}
\E \Big[(h^p+|\Delta W^h_s|^p+|\Delta Z^h_s|^p)\ex^{p\kappa_1 s^h+p\kappa_2|\Delta W^h_s|+p K|\Delta Z^h_s|}\Big] \E (1+|X^h_{n_sh}|^p)\,\di s\\
&\leq C_{3\text{a}}h(1+|x|^p).
\ean
3b, 3c. As in the estimation for $F^a_3$, we also obtain
\ban
&\E \sup_{0\leq s\leq t}\Big|\int_0^s \langle F^{b}_3(u),\, \di W_u \rangle\Big|^p
\leq C_{3\text{b}}h(1+|x|^p), \\
&\E \sup_{0\leq s\leq t}\Big|\int_0^s F^{b'b}_3(u)\, \di u\Big|^p
\leq C_{3\text{c}}h(1+|x|^p).
\ean
3d. $F^{\widetilde{N}}_3$ is estimated as follows:
\ban
\Big|\Psi(x,\tau,w,z+\zeta)-\Psi&(x,\tau,w,z)-\varphi^\zeta(x)+x \Big|\\
&=\Big|\Psi(x,\tau,w,z+\zeta)-\Psi(x,\tau,w,z)-\Psi(x,0,0,\zeta)+\Psi(x,0,0,0)\Big| \\
&\leq_C
\tau\|\zeta\|\max_{i=1,\dots,d}\sup_{\theta,\theta'\in[0,1]}|\Psi_{\tau z^i}(x,\theta''\tau,\theta''w,\theta''z+\theta' \zeta)|\\
&+\|w\|\|\zeta\|\max_{i,j=1,\dots,d}\sup_{\theta,\theta'\in[0,1]}|\Psi_{w^i z^j}(x,\theta''\tau,\theta''w,\theta''z+\theta' \zeta)|\\
&+\|z\|\|\zeta\|\max_{i,j=1,\dots,d}\sup_{\theta,\theta'\in[0,1]}|\Psi_{z^i z^j}(x,\theta''\tau,\theta''w,\theta''z+\theta' \zeta)|\\
&\leq_C (1+|x|)  (|\tau| + |w| +|z| )|\zeta| \ex^{\kappa_1 \tau + \kappa_2|w| +K(|z|+|\zeta|)}.
\ean
Hence again by Novikov's inequality,
\ban
&\E \sup_{0\leq s\leq t}\Big|\int_0^s\int_{|\zeta|>0} F^{\widetilde{N}}_3(u,\zeta)\,\widetilde N(\di \zeta,\di u)\Big|^p\\
&\leq_C \E \int_0^t \Big(\int_{|\zeta|>0}|F^{\tilde{N}}_3(u,\zeta)|^2\, \nu(\di \zeta)\Big)^{p/2}\, \di s
+ \E \int_0^t\int_{|\zeta|>0}|F^{\tilde{N}}_3(u,\zeta)|^p\, \nu(\di \zeta)\, \di s\\
&\leq_C
\Big(\Big(\int_{|\zeta|>0}|\zeta|^2 \ex^{2K|\zeta|}\, \nu(\di \zeta)\Big)^{p/2}
+ \int_{|\zeta|>0}|\zeta|^p \ex^{pK|\zeta|}\, \nu(\di \zeta) \Big)\times\\
&\times
\int_0^t \E (1+|X^h_{n_sh}|)^p \E (|s^h|^p+|\Delta W^h_s|^p+|\Delta Z^h_s|^p)
\ex^{p\kappa_1s^h+p\kappa_2|\Delta W^h_s|+pK|\Delta Z^h_s|}\,\di s\\
&\leq
C_{3\text{d}} h(1+|x|^p).
\ean
3e. Eventually we consider the term $F^{\nu}_3$. We take into account that $\Psi_{z}(x,0,0,0)^T=c(x)$ to get
\ban
\Psi&(x,\tau,w,z+\zeta)-\Psi(x,\tau,w,z)-\langle \Psi_z(x,\tau,w,z),\zeta\rangle
-\varphi^\zeta(x)+x +  \langle c(x),\zeta\rangle  \\
&\leq_C
\tau\|\zeta\|^2\max_{i=1,\dots,d}\sup_{\theta,\theta', \in[0,1]}|
\Psi_{\tau z^i z^j }(x,\theta'\tau,\theta' w,\theta' z+\theta \zeta)|\\
&+\|w\|\|\zeta\|^2\max_{i,j,k=1,\dots,d}\sup_{\theta,\theta'\in[0,1]}
|\Psi_{w^i z^j z^k}(x,\theta' \tau,\theta' w,\theta' z+\theta \zeta)|\\
&+\|z\|\|\zeta\|^2\max_{i,j,k=1,\dots,d}
\sup_{\theta,\theta'\in[0,1]}
|\Psi_{z^i z^jz^k}(x,\theta' \tau,\theta' w,\theta' z+\theta \zeta)|\\
&\leq_C (1+|x|) |\zeta|^2 (|\tau|+ |w|+|z|) \ex^{\kappa_1 \tau + \kappa_2|w| + K(|z|+|\zeta|)}
\ean
Then by Condition $\textbf{H}_{\Psi,K}$ for the third derivatives of $\Psi$ we have
\ban
&\E \sup_{0\leq s\leq t}\Big|\int_0^t\int_{|\zeta|>0} F^{\nu}_3(u)\,\nu(\di \zeta)\, \di s\Big|^p\\
&\leq_C
\Big( \int_{|\zeta|>0}|\zeta|^2 \ex^{K|\zeta|}\, \nu(\di z)\Big)^p\times\\
&\times\int_0^t \E (1+|X^h_{n_sh}|)^p
\E \Big[(|s^h|^p+|\Delta W^h_s|^p+|\Delta Z^h_s|^p)\ex^{p\kappa_1s^h+p\kappa_2|\Delta W^h_s|+pK|\Delta Z^h_s|}\Big]\,\di s\\
&\leq C_{3\text{e}} h(1+|x|^p).
\ean
Combining all the above estimates form Steps 1, 2 and 3, we get
\ban
\E\sup_{0\leq s\leq t}|X_t(x)-\bar{X}^h_t(x)|^p \leq_C h(1+|x|^p) + \int_0^t \E \sup_{0\leq u\leq s}|X_u(x)-\bar{X}^h_u(x)|^p\,\di s,
\ean
so that the statement follows from the Gronwall Lemma.

\section{Proof of Theorem $\ref{t:main}$\label{s:main proof}}

We follow here the proof of the uniform convergence of random fields as presented in Appendix in Kunita \cite{Kunita-04}.

For $T\in[0,\infty)$ fixed and $h\in(0,1]$, we consider random fields
\ban
U^h(x):= \sup_{t\in[0,T]}| X_t(x) - \bar{X}^h_t(x)|,\quad x\in\bR^d.
\ean
Let $N\in(0,\infty)$.
By Theorem \ref{thm:Xx-Xhx}, for any $p\in[2,\infty)$
\ba
\label{e:UUU}
\sup_{|x|\leq N}\E |U^h(x)|^p \leq_C h.
\ea
Furthermore, with the help of Theorems \ref{t:existenceSDE} and \ref{t:existenceWZ}
for all $h\in(0,1]$ and $|x|,|y|\leq N$
we estimate:
\ba
\label{e:h}
(\E|U^h(x)-U^h(y)|^p )^{1/p}
\leq (\E |U^h(x)|^p )^{1/p} +   ( \E |U^h(y)|^p  )^{1/p}
\leq_C h^{1/p}.
\ea
We also note that
\ban
U^h(x)-U^h(y)&=  \sup_{t\in[0,T]}| X_t(x) - \bar{X}^h_t(x)| - \sup_{t\in[0,T]}| X_t(y) - \bar{X}^h_t(y)|\\
&\leq \sup_{t\in[0,T]} (| X_t(x) - \bar{X}^h_t(x) -(  X_t(y) -\bar{X}^h_t(y)) | + |X_t(y) - \bar{X}^h_t(y) | )\\
&- \sup_{t\in[0,T]}| X_t(y) - \bar{X}^h_t(y)|\\
&\leq   \sup_{t\in[0,T]} | X_t(x) - X_t(y) | + \sup_{t\in[0,T]} |\bar{X}^h_t(x) -\bar{X}^h_t(y)|
\ean
and analogously,
\ban
U^h(y)-U^h(x)\leq   \sup_{t\in[0,T]} | X_t(x) - X_t(y) | + \sup_{t\in[0,T]} |\bar{X}^h_t(x) -\bar{X}^h_t(y)|
\ean
Hence with the help of Theorem \ref{thm:Xx-Xhx} we get that for all $h\in(0,1]$ and $|x|,|y|\leq N$
\ba
\label{e:xy}
(\E  |U^h(x)-U^h(y) |^p )^{1/p}
&\leq (  \E\sup_{0\leq t\leq T} |X_t(x) - X_t(y) |^p )^{1/p}\\
&+    ( \E\sup_{0\leq t\leq T} |\bar X_t^h(x) - \bar X_t^h(y) |^p )^{1/p}\leq_C |x-y| .
\ea
Combining the estimates \eqref{e:h} and \eqref{e:xy} we get that for each $q\in (0,1)$
\ban
 \E |U^h(x)-U^h(y)|^p )^{1/p}\leq_C  ( h^{1/p}\wedge |x-y| ) \leq_C h^{q/p} |x-y|^{1-q}.
\ean
For any $p$ that satisfies $p\geq 2$ and $p>d$
we choose $q\in (0,1-d/p)$, so that $p(1-q)>d$.
Then
\ban
\|U^h\|_{p,p(1-q)}&:=\sup_{|x|\leq N} (\E |U^h(x)|^p )^{1/p} +
\sup_{|x|,|y|\leq N, x\neq y} \frac{(\E |U^h(x) - U^h(y)|^p )^{1/p} }{|x-y|^{p(1-q)/p}}\\
&\leq_C h^{1/p}  + h^{q/p}\leq_C h^{q/p}.
\ean
Consequently, as in the proof of Theorem 4.4 in Kunita \cite{Kunita-04}, we get
\ba
\label{e:UU}
 (\E \sup_{|x|\leq N}| U^h(x)-U^h(0)  |^p  )^{1/p} \leq_C h^{q/p}.
\ea
Combining \eqref{e:UUU} and \eqref{e:UU} yields
\ban
 (\E \sup_{|x|\leq N}| U^h(x)|^p )^{1/p}
&\leq  (\E \sup_{|x|\leq N}| U^h(x)-U^h(0)  |^p )^{1/p} +  (\E |U^h(0)|^p  )^{1/p}
\leq_C h^{q/p}.
\ean
and therefore
\ban
\E \sup_{|x|\leq N}| U^h(x)| \leq_C h^{q/p}.
\ean
Now we maximize the exponent $q/p$. We choose
$p=2d$ and for $\e\in(0,1)$ we set
\ban
q=1-\frac{d}{p}(1+\e)=1- \frac{1+\e}{2}=  \frac{1-\e}{2}.
\ean
This choice yields the estimate
\ban
\E \sup_{|x|\leq N}| U^h(x)|
\leq_C h^\frac{1-\e}{4d}
\ean
and finishes the proof.

\section*{Acknowledgments}
S.\ Thipyrat was supported by the Development and Promotion of Science and Technology Talents Project (DPST), Thai Government Scholarship.


\end{document}